\documentclass[a4paper,11pt]{amsart}

\let\oldtocsection=\tocsection
\let\oldtocsubsection=\tocsubsection
\let\oldtocsubsubsection=\tocsubsubsection
 
\renewcommand{\tocsection}[2]{\hspace{0em}\oldtocsection{#1}{#2}\bfseries}
\renewcommand{\tocsubsection}[2]{\hspace{1.8em}\oldtocsubsection{#1}{#2}}
\renewcommand{\tocsubsubsection}[2]{\hspace{4.4em}\oldtocsubsubsection{#1}{#2}}
\makeatletter
\renewcommand\subsection{\@startsection{subsection}{2}%
  \z@{-.5\linespacing\@plus-.7\linespacing}{.5\linespacing}%
  {\normalfont\scshape}}
\renewcommand\subsubsection{\@startsection{subsubsection}{3}%
  \z@{.5\linespacing\@plus.7\linespacing}{.5\linespacing}%
  {\normalfont\scshape}}

\makeatother
\usepackage[framemethod=tikz]{mdframed}

\usepackage[pdftex]{hyperref}
\hypersetup{
    colorlinks=true, %set true if you want colored links
    linktoc=all,     %set to all if you want both sections and subsections linked
    linkcolor=blue,  %choose some color if you want links to stand out
    allcolors=blue,
}

\usepackage[utf8]{inputenc}
\usepackage[T1]{fontenc}
\usepackage{tikz-cd}
\usepackage{paralist}
\usepackage{mathrsfs}
\usepackage{amssymb}
\usepackage{amsmath}
\usepackage{amsfonts}
\usepackage{enumitem}

\newtheorem{theorem}{Theorem}[section]
\newtheorem{corollary}[theorem]{Corollary}
\newtheorem{lemma}[theorem]{Lemma}

\newtheorem{question}[theorem]{Question}

\newtheorem{remark}[theorem]{Remark}
\newtheorem{Fact}[theorem]{Fact}

\theoremstyle{definition}
\newtheorem{definition}[theorem]{Definition}

\usepackage[pdftex]{hyperref}
\hypersetup{
    colorlinks=true, %set true if you want colored links
    linktoc=all,     %set to all if you want both sections and subsections linked
    linkcolor=blue,  %choose some color if you want links to stand out
    allcolors=blue,
}

\newcommand{\brm}{\begin{remark}\begin{rm}}
\newcommand{\erm}{\end{rm}\end{remark}}

\newcommand{\bce}{\begin{compactenum}}
\newcommand{\ece}{\end{compactenum}}

\newcommand{\bb}[1]{\mathbb{#1}}
\newcommand{\cl}[1]{\mathcal{#1}}

\newcommand{\set}[2]{\{#1 \: ; \: #2\}}
\newcommand{\seq}[2]{\la#1 \: ; \: #2\ra}

\newcommand{\N}{\bb{N}}

\newcommand{\Ac}{\mathcal{A}}

\newcommand{\B}{\cl{B}}

\newcommand{\ZFC}{{\sf ZFC}}
\newcommand{\ZF}{{\sf ZF}}
\newcommand{\CH}{{\sf CH}}
\newcommand{\GCH}{{\sf GCH}}

\renewcommand{\P}{\bb{P}}

\newcommand{\sub}{\subseteq}

\newcommand{\la}{\langle}
\newcommand{\ra}{\rangle}

\newcommand{\Kr}{Kraj{\'i}{\v c}ek~}
\newcommand{\Krr}{Kraj{\'i}{\v c}ek}
\newcommand{\R}{\mathbb{R}}

\newcommand{\Z}{\mathbb{Z}}

\newcommand{\id}{\mathrm{id}}

\newcommand{\AcOm}{\mathcal{A}^+_{M,\Omega}}

\newcommand{\Fa}{\mathcal R_{\mathrm{max}}}
\newcommand{\MGa}{M[G]^{\Fa}}
\newcommand{\MGF}{M[G]^{\mathcal R}}
\newcommand{\Ap}{\mathcal{A}^+_{M,\Omega}}
\newcommand{\Apzero}{\mathcal{A}_{M,\Omega}}

\newcommand{\mup}{\mu_{M,\Omega}}

\newcommand{\Iup}{I_{M,\Omega}}

\newcommand{\st}{\mathit{st}}

\newcommand{\BB}{\mathcal{B}_{M,\Omega}}

\newcommand{\BS}{\mathcal{B}_{\omega_2}}
\newcommand{\BK}{\mathcal{B}_{\omega_1}}
\newcommand{\BM}{(\mathcal{B}, \bar{\mu})}
\newcommand{\BMD}{\mathcal{B}}
\newcommand{\Bk}{\mathcal{B}_\kappa}

\newcommand{\Sp}{\Sigma^+}

\newcommand{\rng}{\mathrm{range}}

\newcommand{\dotM}{\dot{M}^{\mathcal R}}
\newcommand{\dotMG}{M[G]^{\mathcal R}}

\begin{document}

\title[Forcing with random variables \ldots]{Forcing with random variables in bounded arithmetics and set theory}

\author{Radek Honzik}
\address{Charles University, Department of Logic,
Celetn{\' a} 20, Prague~1, 
116 42, Czech Republic
}
\email{radek.honzik@ff.cuni.cz}
\urladdr{logika.ff.cuni.cz/radek}

\thanks{
The author was supported by GA{\v C}R grant ``The role of set theory in modern mathematics'' (24-12141S)}

\begin{abstract} We analyse \Krr's  Boolean-valued random forcing $\BB$ in bounded arithmetics developed in \cite{krajicek11} from the perspective of the forcing in set theory.  We first observe that under the assumption that $M$ is a non-standard $\omega_1$-saturated model of true arithmetics of size $\omega_1$, and $\Omega \in M$ is a non-standard number, $\BB$ is isomorphic to the  probability (random) algebra corresponding to the product measure space on $2^{\omega_1}$ (and hence does not depend on $M$ and $\Omega$). This accentuates the analogies between \Krr's forcing and Scott's proof of the consistency of the negation of the Continuum Hypothesis in \cite{scott67} (Scott constructed a Boolean-valued model using an algebra corresponding to the product measure space $2^{\omega_2}$ to add $\omega_2$-many ``random reals'', each added by the separable algebra corresponding to $2^\omega$).  Thus, in a well-defined sense, the forcing $\BB$ adds a ``random integer'' to the model $M$, using a non-separable algebra corresponding to $2^{\omega_1}$. This puts \Krr's forcing---the only known example of a forcing construction in bounded arithmetics which adds ``generic'' integers---in the context of well-studied forcings in set theory. If $G$ is a generic filter for $\BB$ over a transitive model of set theory $V$, we naturally define in $V[G]$ two-valued generic extensions $\MGF$ of $M$ which correspond to Boolean-valued models in \cite{krajicek11} (where $\mathcal R$ ranges over collections of random variables which function as names for new integers). All $\MGF$'s are submodels of a maximal model $\MGa$ which uses all random variables (they satisfy the same quantifier-free formulas). We then study the relationship between the linear order $(M,<)$ and its extensions $(\MGF,<)$, proving several results on the extent of the mutual density of  new integers in $\MGF$ and the ``ground-model'' integers in $M$. We do not directly obtain new results in propositional complexity, but the presented results provide new insights into the structure of generic models of bounded arithmetics. At the end, we discuss some advantages and limitations of interpreting forcing in bounded arithmetics (and other weak theories) in the framework of set-theoretic forcing, providing an alternative to an axiomatic approach to forcing in bounded arithmetics formulated by Atserias and M{\"u}ller in \cite{atseriasmuller15}.\end{abstract}

\maketitle

\tableofcontents

\section{Introduction}

In \cite{krajicek11} \Kr developed a method which uses a probability measure algebra (derived from Loeb measures for hyperfinite integers in non-standard analysis) to construct Boolean-valued models of weak arithmetical theories. In an exciting example of a transfer of  ideas between different mathematical fields, he reformulated an argument of Scott \cite{scott67}---who used Boolean-valued models and a measure algebra to add new reals and thus show the consistency of the negation of the Continuum Hypothesis ($\CH$)---to find a way of adding new non-standard integers to saturated models of true arithmetics. By suitably limiting the collection of names for these new integers (``random variables'' in the terminology of \cite{krajicek11}) in terms of their computation complexity, he was able to prove independence of various statements in bounded arithmetics (with implications for the existence or non-existence of lower bounds for proofs of propositional tautologies, see \cite{buss85,krajicek95,cook2010logical} for more details on this connection). 

In this article, we analyze  \Krr's forcing construction from the point of forcing in set theory: we interpret it as providing a method for constructing various ``generic'' extensions\footnote{See Definitions (\ref{def:MG}) and (\ref{m:1}) for the exact meaning of $\MGF$.} $\MGF$ of  an $\omega_1$-saturated model $M$ of true arithmetics, where $\mathcal R$ is a collection of random variables.\footnote{The models we obtain are traditional two-valued models, not Boolean-valued models over uncountable Boolean algebras as in \cite{krajicek11}.} Since \Krr's forcing is the only genuine example of a forcing notion in bounded arithmetics which adds ``generic'' non-standard numbers to $M$, we will focus on proving some results related to the relationship of the linear order $(M,<)$ and its extension $(\MGF,<)$. 

Suppose for simplicity that $\CH$ holds and let $M$ be an $\omega_1$-saturated model of true arithmetics of size $\omega_1$. We start by ``connecting the dots'' in the existing literature and observe that  under this assumption the random forcing in \cite{krajicek11} is actually isomorphic to the probability measure algebra $\BK$ generated by the  product measure space $2^{\omega_1}$ with the standard measure (Theorem \ref{th:BBtype}).\footnote{\Kr does not specify the size of $M$, only the requirement of $\omega_1$-saturation is essential. We use $\CH$ for simplicity to fix a specific size of $M$. If $M$ is larger, the arguments in the article still apply but some additional details must be checked to find the corresponding probability algebra (see Corollary \ref{cor:BBtype}). However, these more general cases are not essential for this article so we will not pursue this topic systematically.} This observation is a combination of Maharam theorem \cite{maharam1942homogeneous}, results in non-standard analysis (hyperfinite integers and sets) due to Jin and Keisler \cite{jin1992maharam} and verification that the key properties still apply for \cite{krajicek11}. We find Theorem \ref{th:BBtype} intriguing, both from the strictly technical perspective of set theory, but also in the broader sense of utility of set theory in mathematics.

With regard to the technical perspective, since $\BK$ is a standard set-theoretic forcing,  it has been intensively studied in set theory. \Krr's construction---as discussed in this article---adds the interesting information that in generic extensions $V[G]$ for $\BK$, one also finds two-valued generic extensions $\MGF$ of bounded arithmetics which extend non-standard $\omega_1$-saturated models of true arithmetics  $M$ (see Corollary \ref{cor:BBtype}).

As regards the role of set theory, the set-theoretic view on forcing in bounded arithmetics may work as one particular way of providing a forcing framework for weak theories: instead of starting with a ground model $M$ of a weak theory $T$ and developing a forcing framework specific for $T$ (either with or without actual models), one can always start with the whole model $V$ of set theory, view $M$ as existing in $V$, and discuss extensions of $M$ in $V[G]$ (for an appropriate forcing notion $\P \in V$).

One can raise various objections against this perspective, the most serious one being that the explicit use of models of set theory kills important differences and nuances expressible in $T$ (like the schema of induction in bounded arithmetics). There is a priori truth in this position, but practice shows, so far, perhaps surprisingly, that the forcing notions used in bounded arithmetics are not specific to this weak theory: forcing with finite conditions over a countable non-standard model as in Paris--Wilkie \cite{pariswilkiewoods88} is set-theoretically isomorphic to adding a new Cohen real (see Section \ref{sec:PW}), and by Theorem \ref{th:BBtype}, \Krr's forcing is isomorphic to the random algebra $\BK$. It is plausible that other well-known forcing notions in set theory can prove to be useful in bounded arithmetics, provided specific combinatorial lemmas for $T$ and the respective forcing notions can be found, such as various switching lemmas in bounded arithmetics\footnote{\label{ft:sw}The switching lemmas are based on combinatorial lemmas in computational complexity. For example, the switching lemma for PHP-trees developed in~\cite{krajivcek2008exponential, pitassi1993exponential} to improve upon Ajtai's lower bound \cite{ajtai1994complexity} is analogous to an earlier result of Håstad~\cite{hastad1986almost} related to Boolean circuits. For more details, see an overview of Thapen~\cite{thapen2022notes}. See also Honzik et al.\ \cite{Rjoint} for more context and details.}. We give a few more comments on this speculative topic in Section \ref{sec:found}.

We attempted to make this article relatively self-contained and accessible to everyone with basic understanding of forcing in set theory. For this reason, we briefly review the definition of the algebra $\BB$ and the concept of random variables from \Kr \cite{krajicek11}. We also  summarize the key aspects of forcing with probability (measure) algebras and the essentials of non-standard measure theory.

The article is structured as follows. We first discuss non-standard models in forcing extensions in a general setting (Section \ref{sec:1}). A simple illustration of this approach is given in Section \ref{sec:PW} where we briefly mention the Paris--Wilkie forcing. In Section \ref{sec:PM} we discuss Maharam theorem and  how measure algebras are used in forcing (Section \ref{sec:PM}). 

The key part of the article is the analysis of \Krr's forcing in Section \ref{sec:krajicek}. In Theorem \ref{th:BBtype} we prove it is isomorphic to the probability algebra $\BK$ (under the assumption $|M| = \omega_1$). However, the forcing itself does not completely determine the resulting extensions: the essential part of the arguments in \cite{krajicek11} is the appropriate selection of the set of random variables $\mathcal R$, which form the domain of the generic extensions, based on some restrictions regarding their propositional complexity properties. The resulting generic models $\MGF$ are therefore parametrized by $\mathcal R$, the set of random variables (if $\mathcal R$ contains the identity function $\id: \Omega \to \Omega$, then the new number determined by $\id$, the ``random integer'',  constructs the whole model $\MGF$ in the sense of Theorem \ref{th:st1}).  

If $\mathcal R$ is maximal (it contains all random variables), the corresponding model $\MGa$ is the ``maximal'' model, with the other models $\MGF$ being its substructures (Theorem \ref{th:reg}). We do not have space to study the structure of the models $\MGF$ in detail, but we think it is plausible that it corresponds to the computational complexity properties of the respective Boolean-valued models in \cite{krajicek11} in a non-trivial way. We limit ourselves to proving several results related to the density of $M$ in the linear order $(\MGF,<)$, and conversely, see Theorems \ref{th:dd} and \ref{th:dense}. 

We end the article by discussing the topic of forcing in bounded arithmetics and its relationship to forcing in set theory from a broader perspective (Section \ref{sec:found}).

\brm
We do not apply our results to obtain new independence theorems for bounded arithmetics, our focus is on the set-theoretic properties of the generic extensions $\MGF$. However, with more work, it is plausible that connections can be found (because properties of $\MGF$ capture forcible statements in the sense of Lemma \ref{fact:m} and the recursive definition before (\ref{k:1})). See also Remark \ref{rm:bound} for more comments.
\erm

\brm A remark on notation. We use $\N$ to denote the standard natural numbers and we use the index $k$ to range over $\N$. We use letters $m,n, \Omega$ to denote non-standard numbers.  Note that $\N$ is usually denoted by $\omega$ in set theory, but we prefer to use $\N$ since we are primarily interested in models of arithmetics where $\N$ is more common.
\erm

\textbf{Acknowledgement.} The author wishes to thank (alphabetically) to Ond{\v r}ej Je{\v z}il, Jan \Kr, Moritz M{\"u}ller and Mykyta Narusevych for inspiring discussions on the relationship between forcing in set theory and arithmetics. Any inaccuracies in the article are solely my own, though.

\section{Models of arithmetics in models of set theory}\label{sec:1}

We start by observing that one can consider non-standard models of arithmetics inside of a transitive model of set theory:  Let $(V,\in)$ be a transitive countable model of set theory.\footnote{We use the letter $V$ to denote a transitive countable model of set theory. Alternatively, we can forget about $V$ and view the objects $(M,\mathcal R)$ and $\P$ as being defined in an ambient transitive model of set theory. Only at the stage when we consider a generic filter, we may say that the ambient universe is actually countable in some still larger universe. See Kunen \cite[Section IV.5]{MR2905394} for an extensive discussion of metamathematical approaches to forcing.} Suppose 
$$(M, \mathcal R) \in V$$ is a structure containing a non-standard model of arithmetics $M$ (as a structure in a suitable language extending the usual language of Peano Arithmetics) and some additional parameters $\mathcal R$. Let $(\P,\preceq) \in V$ be a partial order whose size may be uncountable in the sense of $V$. Using the standard set-theoretic mechanism, let $V^\P$ be the set of names in $V$. If $G$ is a generic filter which meets all dense sets in $V$, let $V[G]$ denote the respective forcing extension.

A ``$\P$-generic extension'' of $M$ can be defined by means of an appropriately chosen name $\dotM \in V^\P$, which depends on $M$ and the parameters in $\mathcal R$. Provided that the generic structure is uniquely defined from $(M,\mathcal R)$,  the name $\dotM$ can always be chosen canonically in the sense that the weakest condition in $\P$ forces that $\dotM$ is a structure of the desired type.

Suppose $G$ is $V$-generic for $\P$, then we can define:

\begin{equation}\label{def:MG}
\dotMG :=_{df} \set{\sigma_G}{1_\P \Vdash \sigma \in \dotM},
\end{equation}

and call $\dotMG$ a generic extension of $M$ (determined by $M,\mathcal R$ and $G$).

\brm \label{rm: canonical}
We will discuss two examples of this general method. In Section \ref{sec:PW}, the Paris--Wilkie extension corresponds to a model $\dotMG$ of the form $(M,G)$, where $G$ is treated as an additional unary predicate. In Section \ref{sec:krajicek}, in particular in (\ref{m:1}), $\dotMG$ is determined by the model $M$ and a fixed collection $\mathcal R$ of random variables which determine the domain of the generic extension.
\erm

This set-up allows us to formulate a forcing completeness theorem for $\dotM$ and the associated generic extension as a straightforward corollary of the forcing completeness theorem for $V$ and $V[G]$ in set theory:

\begin{theorem}[A forcing completeness theorem] \label{th:complete}
The following are equivalent for all $\varphi$, $p \in \P$ and $\sigma$ such that $1_\P \Vdash \sigma \in \dotM$:
\bce[(i)]
\item $p \Vdash \varphi^{\dotM}(\sigma)$.
\item For all $V$-generic filters $G$ for $\P$ which contain $p$, $$V[G] \models \varphi^{\dotMG}(\sigma_G).$$
\ece
\end{theorem}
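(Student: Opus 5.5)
The plan is to reduce Theorem \ref{th:complete} directly to the forcing theorem of set theory (truth lemma plus definability of forcing), as in Kunen \cite[Chapter IV]{MR2905394}. The key observation is that $\varphi^{\dotM}(\sigma)$ is simply a formula of set theory: it is $\varphi$ with all quantifiers relativized to the name $\dotM$ and the symbols interpreted by the structure $\dotM$ carries. The first step is to check that relativization commutes with evaluation by $G$. More precisely, for every $V$-generic $G$ we will show that
\[
V[G]\models \bigl(\varphi^{\dotM}(\sigma)\bigr) \iff V[G]\models \varphi^{\dotMG}(\sigma_G).
\]
On the left, $\varphi^{\dotM}$ is read in $V[G]$ with $\dotM$ interpreted as $(\dotM)_G$.

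This identity requires showing that the set $\dotMG$ defined in (\ref{def:MG}) coincides with the domain of $(\dotM)_G$. Since the name $\dotM$ is chosen canonically, every element of $(\dotM)_G$ is of the form $\sigma_G$ for some $\sigma$ with $1_\P\Vdash\sigma\in\dotM$. To justify this, take an element $\tau_G \in (\dotM)_G$ and use the mixing lemma (fullness of the Boolean-valued model): from $\tau$ we can build a name $\sigma$ such that $1_\P\Vdash\sigma\in\dotM$ and $\sigma_G=\tau_G$. Concretely, $\sigma$ agrees with $\tau$ below $[\![\tau\in\dotM]\!]$ and with some fixed element of $\dotM$ elsewhere. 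The converse inclusion is immediate. With the domains identified, the relations and functions of the structure are evaluated by $G$ in the same way, so a routine induction on $\varphi$ gives the displayed equivalence.

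With this in hand, the two implications follow quickly.

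\textbf{(i)$\Rightarrow$(ii).} If $p\Vdash\varphi^{\dotM}(\sigma)$, then by the truth lemma every generic $G\ni p$ satisfies $V[G]\models\varphi^{\dotM}(\sigma)$ with the names evaluated. By the displayed equivalence this is $V[G]\models\varphi^{\dotMG}(\sigma_G)$.

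\textbf{(ii)$\Rightarrow$(i).} We argue by contraposition. Suppose $p\not\Vdash\varphi^{\dotM}(\sigma)$. Then there is $q\le p$ with $q\Vdash\neg\varphi^{\dotM}(\sigma)$, by the definability and density properties of $\Vdash$. Since $V$ is countable, there is a $V$-generic $G\ni q$, and hence $G \ni p$. For this $G$ the truth lemma and the displayed equivalence give $V[G]\models\neg\varphi^{\dotMG}(\sigma_G)$, which contradicts (ii).

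The main obstacle is the identification of $\dotMG$ with $(\dotM)_G$, that is, the fullness/mixing argument. Everything else is the standard forcing theorem applied verbatim. If the particular name $\dotM$ lacks the needed canonicity, we would instead take the statement's $\dotMG$ as the definition and run the induction on $\varphi$ directly for quantifiers over $\dotMG$. In that version the existential step is the one that needs fullness: from $p\Vdash\exists x\in\dotM\,\psi$ we must produce a single witness name $\sigma'$ with $1_\P\Vdash\sigma'\in\dotM$, again by mixing over a maximal antichain.
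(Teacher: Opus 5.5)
Your proposal is correct and follows the paper's route: the paper's proof is exactly the set-theoretic forcing theorem applied to the relativized formula $\varphi^{\dotM}(\sigma)$, combined with the identity $(\dotM)_G=\dotMG$. The only difference is that the paper takes that identity for granted. Your mixing argument, which uses that $1_\P$ forces $\dotM$ to be a nonempty structure, actually justifies the nontrivial inclusion $(\dotM)_G\subseteq\dotMG$.
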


\begin{proof}
Here, $\varphi^{\dotM}$ is shorthand for the quantification in $\varphi$ bounded into the name $\dotM$. By the forcing completeness theorem, and by using $(\dotM)_G = \dotMG$, we obtain the desired equivalence.
\end{proof}

\brm
One may ask in what sense $M$ can be taken to be the ``ground model'' for the extension $\dotMG$. If $\dotM$ contains names for all elements of $M$, then one may think about $M$ as being the ground model because $M \sub \dotMG$. However, in contrast to set theory and the inclusion $V^\P \sub V$, in the present case the inclusion typically fails, obtaining $\dotM \not \sub M$. The reason is that the cumulative hierarchy of names does not have a natural interpretation in $M$.\footnote{In the case of random forcing in Section \ref{sec:krajicek}, while the random variables are included in $M$, $\mathcal R \sub M$, the names corresponding to $\mathcal R$ use the probability algebra $\BB$ which is disjoint from $M$. So, strictly speaking, $M \cap \dotM = \emptyset$, provided the usual definitions of names are used. However, it is possible to code names in some other way by elements in $M$ (one can identify names directly with the variables in $\mathcal R$). See Atserias and M{\"u}ller \cite{atseriasmuller15} for an axiomatic formulation of a forcing framework where this is done.}  In particular, while $V[G]$ is the $\sub$-least transitive model of $\ZFC$ with the same height as $V$ which contains $G$, $\MGF$ is the $\sub$-least model of a given theory which is constructible in $V[G]$ from $\dotM$ and $G$.
\erm

\brm
Suppose $\MGF$ is a model of a theory $T$ and satisfies some formula $\varphi$, i.e., $$V[G] \models \varphi^{\MGF}.$$ Then $T$ does not prove $\neg \varphi$: any proof of $\neg \varphi$ from $T$ in $V$ would be in $V[G]$, which would contradict the fact that in $V[G]$, $\MGF$ models $T \cup \{\varphi\}$.\footnote{Because $V$ and $V[G]$ are transitive models, the natural numbers are absolute, and so are the proofs.}
\erm

\brm \label{rm:bound}
Note that by Theorem \ref{th:complete}, the statements which one needs to force to obtain independence results in bounded arithmetics are formulas of the form $\varphi^{\dotM}$. If $\dotM$ denotes the standard model, this bounded formula is $\Delta_1^{\ZF}$ and hence absolute for transitive models (see Footnote \ref{ft:delta} for more details).  If $M$ is non-standard,  $\varphi^{\dotM}$ is no longer absolute. Still, the quantifiers are bounded by $\dotM$, and hence only the ``first-order'' properties of the extensions $\MGF$ are relevant\footnote{Boolean-valued structures constructed in \Kr \cite{krajicek11} are often two-sorted, but the second sort for random functionals is typically expressible in terms of $M$, so from the point of set theory the relevant properties of the two-sorted structures $(\MGF, \ldots)$ are still only first-order. See Remark \ref{rm:2nd} for more details.}, making any second-order properties of models $\MGF$ interesting only from the point of set theory or model theory. But the dividing line is not clear: in  Remark \ref{rm:log} we mention a natural question in bounded arithmetics related to new numbers in $\MGF$ (the ``lengths'' of formulas), for which a better understanding of the extension $(\MGF,<)$ may prove to be fruitful (see Theorems \ref{th:st1}, \ref{th:dd} and \ref{th:dense}).
\erm

We will consider two examples to illustrate this approach: the Paris--Wilkie forcing with finite conditions in Section \ref{sec:PW} and the forcing with measure algebras in Section \ref{sec:krajicek}.

\section{Cohen reals: Paris--Wilkie forcing}\label{sec:PW}\label{sec:2}

Let $M \in V$ be a countable non-standard model of arithmetics and $n \in M$ is a non-standard number. The forcing notion $(\P,\preceq)$ is defined to contain as conditions all finite  functions from $n$ to $n+1$.\footnote{The generic filter adds a surjection from $n$ onto $n+1$, and hence an injection from $n+1$ into $n$.} The ordering is the reverse inclusion. Let $G$ be a $\P$-generic filter over $V$.

\begin{lemma}
$(\P,\preceq)$ is forcing-equivalent to a Cohen forcing adding a new subset of natural numbers $\N$.
\end{lemma}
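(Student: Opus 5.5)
The plan is to use the standard fact that any countable atomless partial order is forcing-equivalent to Cohen forcing $\mathrm{Fn}(\N,2)$. Concretely, I will show that $\P$ is countable in $V$ and atomless, and then either quote this fact or rebuild the dense embedding directly.

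\emph{Countability.} $M$ is countable in $V$, so $n=\set{m\in M}{m<n}$ and $n+1$ are countable sets in $V$. Here the ``functions from $n$ to $n+1$'' are finite in the sense of $V$: they are finite subsets of the countable set $n\times(n+1)$, and there are only countably many of those.

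\emph{Atomlessness.} Let $p\in\P$. Since $n$ is non-standard, the domain $\{m\in M ; m<n\}$ is infinite, and $p$ is finite, so we can pick $m<n$ with $m\notin\mathrm{dom}(p)$. The extensions $p\cup\{(m,0)\}$ and $p\cup\{(m,1)\}$ both refine $p$ and are incompatible; this uses only that the codomain has at least two elements. So every condition has two incompatible extensions.

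\emph{Equivalence with Cohen forcing.} Fix in $V$ an enumeration of the countable poset $\P$, and carry out the usual back-and-forth/tree construction. We build a map $e$ from $2^{<\N}$ into $\P$ by recursion on length:
\begin{itemize}
\item $e(\emptyset)=1_\P$;
\item having defined $e(s)$, we take the first condition $q_k$ in the enumeration (with $k\ge|s|$) compatible with $e(s)$;
\item we choose two incompatible extensions of $e(s)$, arranged so that one of them lies below $q_k$ whenever possible, and assign them to $s^\frown0$ and $s^\frown1$.
\end{itemize}
For finitely many splits we refine by successive binary splitting. This makes $e$ order- and incompatibility-preserving, and the enumeration bookkeeping makes its range dense in $\P$. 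Hence $e$ is a dense embedding, so $\mathrm{RO}(\P)\cong\mathrm{RO}(\mathrm{Fn}(\N,2))$. Generic filters then correspond, and $V[G]=V[c]$ for a Cohen real $c\subseteq\N$.

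\emph{Main obstacle.} The only real subtlety is the density bookkeeping in the recursion, which is routine. Beyond that, one must keep in mind that all the relevant notions (finiteness of conditions, countability of $M$) are evaluated in $V$ and not in $M$. Once this is understood, the rest is the classical uniqueness theorem for countable atomless forcings, which I would simply cite (e.g.\ Kunen).
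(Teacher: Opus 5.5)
Your checks that $\P$ is countable in $V$ and that every condition has two incompatible extensions are correct. If you simply cite the classical theorem on countable atomless forcings, you have essentially the paper's argument, which appeals to the uniqueness of the countable atomless Boolean algebra. The explicit construction you offer instead fails, and the problem is not only bookkeeping: there is \emph{no} dense embedding of $2^{<\N}$ into $\P$ itself.

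\begin{itemize}
\item A dense embedding $e$ must send $\{s^\frown 0,s^\frown 1\}$ to a set predense below $e(s)$. Given $r\le e(s)$, pick $t$ with $e(t)\le r$. Incompatibility preservation forces $t$ to be comparable with $s$, and in either case $r$ is compatible with $e(s^\frown 0)$ or $e(s^\frown 1)$.
\item In $\P$, no two incompatible extensions $a,b$ of a condition $p$ are predense below $p$. There is $x_0\in\mathrm{dom}(a)\cap\mathrm{dom}(b)$ with $a(x_0)\neq b(x_0)$, and necessarily $x_0\notin\mathrm{dom}(p)$. Since the codomain $n+1$ is infinite, you can pick $c\notin\{a(x_0),b(x_0)\}$. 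Then $p\cup\{(x_0,c)\}$ extends $p$ and is incompatible with both $a$ and $b$.
\end{itemize}

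So your claim that ``the enumeration bookkeeping makes its range dense'' is false. For instance, with $e(0)=\{(x_0,0)\}$ and $e(1)=\{(x_0,1)\}$, nothing in the range lies below $\{(x_0,2)\}$. Even for $\mathrm{Fn}(\N,2)$, where $2^{<\N}$ does embed densely, your recipe of putting one child below $q_k$ fails, because the two children cannot in general exhaust $e(s)$.

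The missing ingredient is Boolean complements, which $\P$ lacks. Run the tree construction in $\mathrm{RO}(\P)^+$ instead. Set $e(\emptyset)=1$. When passing from level $k$ to level $k+1$, split each $e(s)$ with $0\neq e(s)\wedge q_k\neq e(s)$ into $e(s)\wedge q_k$ and $e(s)\wedge\neg q_k$. Split every other node into two nonzero complementary pieces, using atomlessness: take $p\le e(s)$ in $\P$, let $a\perp b$ extend $p$, and use $a$ and $e(s)\wedge\neg a$. Each level $k+1$ is then a finite partition of unity whose pieces are each below or disjoint from $q_k$, so some piece lies below $q_k$. This gives density, so $e$ is a dense embedding into $\mathrm{RO}(\P)^+$ and $\mathrm{RO}(\P)\cong\mathrm{RO}(\mathrm{Fn}(\N,2))$.

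Alternatively, follow the paper. The subalgebra of $\mathrm{RO}(\P)$ generated by the image of $\P$ is countable, atomless and dense. It is therefore isomorphic to the unique countable atomless Boolean algebra, whose completion is the Cohen algebra.
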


\begin{proof}
This is a standard argument using the fact that there is up to isomorphism only one atomless countable Boolean algebra. If $A,B$ are countable, then a partial order composed of finite functions from $A$ to $B$ is densely embeddable into this Boolean algebra.\end{proof}

By results of Paris and Wilkie (see \cite{pariswilkiewoods88}, and also \cite[Section 4.1]{atseriasmuller15}), structure $(M,G)$, where $G$ is treated as an additional predicate, is a model of the negation of the pigeon hole principle for $n+1$ and $n$ together with the least number principle for existentially quantified quantifier-free formulas. 

\brm \label{rm:Cohen}
It is worth mentioning, for comparison with random forcing, that the Cohen forcing can be equivalently described as a forcing notion $\P$ whose conditions are closed intervals $[a,b]$, $a \neq b$, with rational endpoints in the unit interval $[0,1]$ on the reals. The ordering is $[a,b] \le [c,d]$ if and only if $[a,b] \sub [c,d]$. If $G$ is a generic filter, the intersection of the intervals in $G$ determines a new ``Cohen real''. Compare with the random forcing in Remark \ref{rm:mex}(\ref{mex:1}) which uses Borel subsets of positive measure, with the ordering being inclusion modulo the ideal of the Lebesgue null sets, to determine a new ``random real''.
\erm

\section{Probability measure algebras and forcing}\label{sec:PM}

\subsection{Maharam's theorem}\label{sec:M}

We will review Maharam's result that up to isomorphism there are very few infinite \emph{homogeneous probability measure algebra}: in fact any such algebra is isomorphic as a measure algebra to the algebra obtained from the standard product measure on the space $2^\kappa$ for some infinite cardinal $\kappa$, modulo the ideal of the null sets.

We will review basic facts and definition and observe that the measure algebra in \Kr \cite{krajicek11} is of this type (Theorem \ref{th:BBtype}). It follows that---despite its nonstandard construction---it is as a forcing notion equivalent to the usual set-theoretic forcing which adds $\kappa$-many new random reals (for an appropriate $\kappa > \omega$).

We will focus only on probability algebras.

\begin{definition}\label{def:ma}
A \emph{probability measure algebra} is a pair $\BM$ such that $\BMD$ is a $\sigma$-complete Boolean algebra and $\bar{\mu}: \BMD \to [0,1]$ is a function which satisfies:
\bce[(i)]
\item $\bar{\mu}(0) = 0$, $\bar{\mu}(a) > 0$ for all $a \neq 0$.
\item Whenever $\seq{a_k}{k \in \N}$ is a disjoint sequence in $\BMD$, then $\bar{\mu}(\bigvee \set{a_k}{k\in \N}) = \sum_{k \in \N}\bar{\mu}(a_k)$.
\item (Probability algebra) $\bar{\mu}(1) = 1$.
\ece
\end{definition}

Note that all measure algebras are ccc and complete (by ccc and $\sigma$-completeness).

A key notion in Maharam's analysis is that of \emph{Maharam-type-homogeneity} of a measure algebra $\BM$ (see \cite[331F]{fremlin2004measure}).

\begin{definition}\label{def:homo}
Let $\BM$ be a probability measure algebra.
\bce[(i)]
\item We say that $\BM$ is has \emph{Maharam type} $\kappa$ if $\kappa$ is the smallest cardinal such that there is a subset $D$ of $\BM$ of size $\kappa$ which \emph{completely generates} $\BMD$ (i.e.\ $\BM$ is the smallest algebra containing $D$ which is closed under suprema and infima of arbitrary sets). We denote the Maharam type of $\BM$ by $\tau(\BM)$.

\item We say that $\BM$ is \emph{Maharam-type-homogeneous} if  every  non-trivial principal ideal of $\BMD$ has the same Maharam type as the whole algebra, i.e.\ for every $y \neq 0$ in $\BMD$,  $\tau(\BMD) = \tau(\mathcal{B}_y)$, where $\mathcal{B}_y := \set{x \in \mathcal{B}}{x \le y}$.
\ece
\end{definition}

In the context of $\sigma$-finite measure algebras (so in particular for the probability algebras), the following Fact provides another useful characterization of Maharam-type-homogeneity.

\begin{Fact}
\label{f:homo}
A probability algebra $\BM$ is Maharam-type-homogeneous if and only if $\BMD$ is isomorphic, as a Boolean algebra, to every non-trivial principal ideal of $\BMD$ (i.e.\ to ideals $\mathcal{B}_y = \set{x \in \mathcal{B}}{x \le y}$ for some $y \in \mathcal{B}$).
\end{Fact}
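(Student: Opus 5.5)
The plan is to treat the two directions separately. The implication from ``isomorphic to every principal ideal'' to homogeneity is purely Boolean and essentially immediate. The converse will be reduced to Maharam's classification theorem for homogeneous probability algebras, which I take as given (cf.\ \cite[331I, 331L]{fremlin2004measure}).

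($\Leftarrow$) Recall that the Maharam type $\tau(\BMD)$ of Definition \ref{def:homo} is defined only in terms of the Boolean structure: it is the least size of a set completely generating $\BMD$ under arbitrary suprema and infima. A Boolean isomorphism $\pi:\BMD \to \mathcal B_y$ preserves arbitrary suprema and infima, so it maps a completely generating set onto a completely generating set of the same size. Hence $\tau(\mathcal B_y) = \tau(\BMD)$ for every $y \neq 0$, which is homogeneity.

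($\Rightarrow$) Let $\kappa = \tau(\BMD)$. I first dispose of the degenerate case. If $\BMD$ has an atom $a$, then $\mathcal B_a = \{0,a\}$ has type $0$. Homogeneity then forces $\kappa = 0$, so $\BMD = \{0,1\}$. Its only nontrivial principal ideal is $\mathcal B_1 = \BMD$, and the claim is trivial.

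Otherwise $\BMD$ is atomless, and then $\kappa$ is infinite. Fix $y \neq 0$. Equip $\mathcal B_y$ with the normalized measure $\bar\mu_y(x) = \bar\mu(x)/\bar\mu(y)$. This makes $(\mathcal B_y,\bar\mu_y)$ a probability algebra in the sense of Definition \ref{def:ma}, since $\sigma$-completeness and countable additivity are inherited from $\BMD$. It is again atomless, and its type is $\kappa$ by homogeneity of $\BMD$. It is also homogeneous: every nontrivial principal ideal of $\mathcal B_y$ has the form $\mathcal B_z$ with $0 \neq z \le y$, which is a principal ideal of $\BMD$, so its type is $\kappa$. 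Maharam's theorem then applies to both $(\BMD,\bar\mu)$ and $(\mathcal B_y,\bar\mu_y)$. Each is isomorphic, as a measure algebra and in particular as a Boolean algebra, to the measure algebra $\Bk$ of the product measure on $2^\kappa$. Composing these isomorphisms gives $\BMD \cong \mathcal B_y$.

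The main obstacle is Maharam's theorem itself: showing that an atomless homogeneous probability algebra of type $\kappa$ is isomorphic to $\Bk$. This is done by a back-and-forth or transfinite construction of $\kappa$ stochastically independent elements of measure $1/2$ that completely generate the algebra. Since the paper is reviewing Maharam's result, I would cite it rather than reprove it. The only genuine checks left to us are the two routine points above: renormalization yields a homogeneous probability algebra of the same type, and Maharam type is a Boolean invariant.
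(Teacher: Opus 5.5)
Your proof is correct and is essentially the argument behind the paper's citation of \cite[331N]{fremlin2004measure}: the easy direction follows because Maharam type is invariant under Boolean isomorphism, and the hard direction renormalizes $\mathcal{B}_y$ to a Maharam-type-homogeneous probability algebra of the same type and applies Maharam's classification to both $\mathcal{B}$ and $\mathcal{B}_y$. You do need the sharp form of Maharam's theorem, in which the exponent $\kappa$ equals the Maharam type (\cite[331L]{fremlin2004measure}, or equivalently $\tau(\mathcal{B}_\kappa)=\kappa$); this is slightly more than Theorem~\ref{th:Mah} states, but it is the form you cite.
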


\begin{proof}
See \cite[331N]{fremlin2004measure} for a proof.
\end{proof}

Maharam theorem asserts that all infinite homogeneous probability algebras are isomorphic as measure algebras to the algebra obtained from the product measure space $(2^\kappa, \Sigma_\kappa, \lambda_\kappa)$, for some infinite $\kappa$ (see Jech \cite[Example 15.31]{JECHbook} or \cite[254J]{fremlin2004measure2} for the details on this measure).\footnote{Let $T$ be the set of all finite functions from $\kappa$ to $2$. Let $\Sigma_\kappa$ be a $\sigma$-algebra generated by the system $\set{S_t}{t \in T}$, where $S_t = \set{f \in 2^\kappa}{t \sub f}$. The product measure $\lambda_\kappa$ is the unique $\sigma$-additive measure such that $\lambda_\kappa(S_t) = \frac{1}{2^{|t|}}$.} 

Let us review how a probability measure space is used to define a probability measure algebra. For this definition, let us review that a (probability) measure space $(X,\Sigma,\mu)$ is defined analogously as a measure algebra in  Definition \ref{def:ma} except that $\mu$ is defined on $\Sigma$, which is a $\sigma$-algebra of sets (see Fremlin \cite[112A]{fremlin2004measure1} for more details).

\begin{definition}\label{def:alg}
Given a measure space $(X, \Sigma, \mu) $, the associated \emph{measure algebra} $(\mathcal{B}, \bar{\mu})$ is defined as the  the quotient Boolean algebra: $$\mathcal{B} = \Sigma / \mathcal{N},$$ where $\mathcal{N}$ is the $\sigma$-ideal of null sets ($\{A \in \Sigma \mid \mu(A) = 0\}$). The measure $\bar{\mu}$ is induced by $\mu$ and defined by $$\bar{\mu}([A]) = \mu(A).$$ We will often omit $\bar{\mu}$ and write just $\mathcal{B}$ if there is danger of confusion.
\end{definition} 

\begin{definition}
For an infinite cardinal $\kappa$, let $(\Bk,\bar{\lambda}_\kappa)$ be the probability measure algebra associated with the measure space $(2^\kappa,\Sigma_\kappa,\lambda_\kappa)$. We will often write just $\Bk$ to denote the probability algebra.
\end{definition}

Let us state a version of Maharam's theorem which we will use:

\begin{theorem}[Maharam \cite{maharam1942homogeneous}]\label{th:Mah}
Suppose $\BM$ is an infinite probability measure algebra which is homogeneous in the sense of Definition \ref{def:homo}. Then $\BM$ is isomorphic\footnote{An isomorphism $f$ between Boolean algebras which preserves the measure exactly, i.e.\ $\bar{\mu}(a) = \bar{\lambda}_\kappa(f(a))$.} as a measure algebra to the probability algebra $(\Bk, \bar{\lambda}_\kappa)$ for some infinite $\kappa$.
\end{theorem}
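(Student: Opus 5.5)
The plan is to follow Maharam's classical argument: build a measure-preserving isomorphism between $\BM$ and $(\Bk,\bar{\lambda}_\kappa)$ by transfinite recursion, where $\kappa = \tau(\BM)$ is the Maharam type. First I check that $\kappa$ is infinite. By homogeneity every nonzero $y$ has $\tau(\mathcal{B}_y)=\kappa$, so $\BMD$ has no atoms (an atom $y$ would give $\tau(\mathcal{B}_y)=0$). An atomless $\sigma$-complete algebra cannot be completely generated by finitely many elements, so $\kappa \ge \omega$. Next I fix a completely generating set $D=\set{d_\alpha}{\alpha<\kappa}$. In $\Bk$ the natural generators are the coordinate sets $e_\alpha=[\set{f}{f(\alpha)=1}]$. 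These are stochastically independent, each of measure $1/2$, and they completely generate $\Bk$.

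The core of the proof is to find inside $\BMD$ an independent family $\set{c_\alpha}{\alpha<\kappa}$ of elements of measure $1/2$ which completely generates $\BMD$. Once we have it, the map $e_\alpha\mapsto c_\alpha$ extends to the finite Boolean combinations. It preserves measure there, since both sides compute the measure of a cylinder as $2^{-|t|}$. A measure-preserving homomorphism between the generated subalgebras extends uniquely to the metric completions, under the metric $d(a,b)=\bar{\mu}(a\triangle b)$. For probability algebras the completions are exactly the $\sigma$-generated (equivalently completely generated, by ccc) algebras, so we get a measure-preserving isomorphism of $\Bk$ onto $\BMD$.

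To build the $c_\alpha$ I recurse on $\alpha<\kappa$, maintaining that the closed subalgebra $\mathcal{C}_\alpha$ generated by $\set{c_\beta}{\beta<\alpha}$ contains $d_\beta$ for $\beta<\alpha$ "up to the next step". The key lemma is the following. If $\mathcal{C}$ is a closed subalgebra with $\tau(\mathcal{C})<\kappa$, then for every $y\ne 0$ the algebra $\mathcal{B}_y$ is not generated by $\mathcal{C}\restriction y$, because its type is $\kappa$ by homogeneity. From this one extracts a relatively atomless conditional expectation, and produces, for any prescribed $\mathcal{C}$-measurable function $0\le h\le 1$, an element $c$ with $\bar{\mu}(c\wedge a)=\int_a h$ for all $a\in\mathcal{C}$. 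Taking $h\equiv 1/2$ yields a $c$ independent of $\mathcal{C}$ with measure $1/2$. To absorb $d_\alpha$ as well, I interleave countably many such steps. A standard trick approximates $d_\alpha$ by a dyadic sequence of independent halves, $c_{\alpha,k}$ for $k\in\N$, chosen so that $d_\alpha$ lies in the generated algebra. This works because countably many extra generators do not change the count when $\kappa\ge\omega$, and the resulting family can be re-indexed by $\kappa$.

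I expect the main obstacle to be this key lemma: constructing independent halves relative to a small subalgebra, and encoding each $d_\alpha$ by countably many of them. Here I would first try the measure-theoretic version. Use the Radon–Nikodym/conditional-expectation of $d_\alpha$ with respect to $\mathcal{C}_\alpha$, and a Lyapunov/exhaustion argument (ccc plus $\sigma$-completeness) to split elements with prescribed conditional measure. The remaining steps, namely the metric completion argument and the verification that $\kappa$ is infinite, are routine.
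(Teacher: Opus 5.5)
Your strategy is the classical Maharam argument that the paper relies on; the paper gives no proof of its own, only a reference to Fremlin 331I/331L. For uncountable $\kappa$ your scheme works as sketched, because every algebra you feed to the key lemma ($\mathcal{C}_\alpha$ together with $d_\alpha$ and finitely many $c_{\alpha,j}$) is generated by at most $|\alpha|+\omega<\kappa$ elements.

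The gap is the case $\kappa=\omega$, which the statement includes. After the first block, $\mathcal{C}_1$ is generated by infinitely many $c_{0,k}$, so $\tau(\mathcal{C}_1)$ can equal $\omega=\kappa$ and your key lemma no longer applies. Your remark that countably many extra generators ``do not change the count'' only concerns the final re-indexing, not this hypothesis. The recursion can genuinely get stuck:
\begin{itemize}
\item Let $\mathcal{B}$ be the Lebesgue algebra of $[0,1]$ and $d_0=[0,\frac12]$.
\item Let $U_0(x)=x$ for $x\ge\frac12$ and $U_0(x)=\frac12T(2x)$ for $x<\frac12$, where $T(t)=2t \bmod 1$.
\item $U_0$ is uniformly distributed and $d_0=\{U_0<\frac12\}$. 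So its binary digits are independent halves whose closed algebra contains $d_0$, which is all your first block is required to be.
\item However, $y=[\frac12,1]$ lies in $\mathcal{C}_1$, and $\{b\wedge y : b\in\mathcal{C}_1\}$ is all of $\mathcal{B}_y$. Suppose $c$ had measure $\frac12$ and were independent of $\mathcal{C}_1$. Choosing $b\in\mathcal{C}_1$ with $b\wedge y=c\wedge y$ gives $\bar{\mu}(c\wedge y)=\frac12\bar{\mu}(b\wedge y)=\frac12\bar{\mu}(c\wedge y)$, so $c\wedge y=0$. This contradicts $\bar{\mu}(c\wedge y)=\frac14$.
\item Since $U_0$ is two-to-one on $[0,\frac12)$, we have $\mathcal{C}_1\neq\mathcal{B}$.
\end{itemize}
So the construction halts before it has generated $\mathcal{B}$.

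There are two ways to repair this.
\begin{itemize}
\item Treat $\kappa=\omega$ separately. An atomless probability algebra with a countable generating set is isomorphic to the Lebesgue algebra, via refining partitions into $2^n$ pieces of measure $2^{-n}$.
\item Alternatively, restructure the recursion so that $\mathcal{C}_\alpha$ is the closed algebra generated by $\{c_\beta : \beta<\alpha\}$ alone. Its type is then at most $|\alpha|<\kappa$, and it is finite when $\kappa=\omega$. Absorb generators only approximately. At a step devoted to $d$, let $u$ be the conditional expectation of $d$ on $\mathcal{C}_\alpha$. Use your key lemma to choose $c_\alpha$ of measure $\frac12$, independent of $\mathcal{C}_\alpha$, with $c_\alpha\le d$ on $[u\ge\frac12]$ and $c_\alpha\ge d$ on $[u<\frac12]$. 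This step at least halves the measure of $[0<u<1]$, and that measure never increases as the algebra grows. So if each $d_\beta$ is scheduled $\omega$ times, every $d_\beta$ ends up in the closed algebra generated by the $c$'s.
\end{itemize}
With either repair, the rest of your argument goes through: the independence of the whole family, and the extension of $e_\alpha\mapsto c_\alpha$ to the metric completions.
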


\begin{proof}
For the proof, see the main theorems in \cite[331I]{fremlin2004measure} and \cite[331L]{fremlin2004measure} .
\end{proof}

\brm \label{rm:finite}
Let us state a few remarks. First, let us say that $(X,\Sigma,\mu)$ is a \emph{finitely additive measure space} if it satisfies the standard definition but it is only required that $\Sigma$ is an algebra of sets (not necessarily a $\sigma$-algebra).

\bce[(1)]
\item If a probability measure algebra $\BM$ is finite, then it is isomorphic to the two-element algebra $\{0,1\}$.
\item $\mathcal{B}_\omega$ is isomorphic to the the probability algebra of Lebesgue-measurable subsets of the unit interval.
\item It is possible that a measure algebra $\BM$ is derived from just a finitely additive measure space $(X,\Sigma,\mu)$. An important example is the context of non-standard analysis and Loeb measures: the internally defined measure spaces are just finitely additive, but the associated quotient algebras are ($\sigma$-complete) measure algebras. The reason for this discrepancy that the requirement on $\Sigma$ being a $\sigma$-algebra of sets is rather restrictive: the suprema of countable subsets of $\Sigma$ in $(X,\Sigma,\mu)$ must correspond to the set-theoretic unions, which may not be elements of $\Sigma$. However, there may be some other elements in $\Sigma$ which become the suprema in the associated Boolean algebra $\BM$ and ensure that $\BM$ is $\sigma$-complete.\footnote{See Remark \ref{rm:extend} for an alternative approach.}  See the discussion in \Kr \cite[page 12]{krajicek11} and Sections \ref{sec:KLoeb} and \ref{sec:KMaharam} for more details and references.
\ece
\erm

In Section \ref{sec:KMaharam} we will observe that  the probability measure algebras used in \Kr \cite{krajicek11} are homogenous, and hence are equivalent as forcing notions to the algebra $\Bk$ for some uncountable $\kappa$.

\subsection{Forcing with probability measure algebras}\label{sec:determines}

\begin{definition}\label{def:plus}
Let us fix the following notation. Suppose $\BM$ is a homogeneous probability measure algebra obtained as in Definition \ref{def:alg} from a (finitely additive) measure space $(X,\Sigma,\mu)$. Then we use $$(\Sp,\sub^*)$$ to denote the preorder which is forcing-equivalent to forcing with $\BM$: $\Sp := X - \mathcal{N}_\mu$ and $x \sub^* y \mbox{ iff } x - y \in \mathcal{N}_\mu$, for all $x, y \in X$. We will often use $\Sp$ to denote the preorder $(\Sp,\sub^*)$ (omitting also an explicit mention of $\mu$ which defines $\sub^*$) if there is not danger of confusion.
\end{definition}

Let us discuss the nature of the generic object added by $\Sp$. Let $G$ be $V$-generic for $\Sp$.  The generic filter $G$ can be equivalently described as some (any) $\sub^*$-decreasing sequence of non-zero sets \begin{equation}\label{eq:seq} \vec{g} = \seq{g_k \in \Sp}{g_k \in G, 0 < \mu(g_k) \le 2^{-k}, k \in \N}\end{equation} in the sense that $V[G] = V[\vec{g}]$ (in $V[\vec{g}]$, $G$ is defined as the collection of all measurable sets which extend some $g_k$ in $\sub^*$). 
\brm \label{rm:mex} Let us discuss some natural examples relevant for us:

\bce[(1)]

\item \label{mex:1}For $\mathcal{B}_\omega$, the preorder $\Sp$ can be equivalently described as Borel subsets of the unit interval $[0,1]$ of non-zero Lebesgue measure. If $G$ is a generic filter and $\vec{g}$ is as in (\ref{eq:seq}), there is a unique real $r_G$ which is the limit of all (Cauchy) sequences $\seq{r_k}{k \in \N}$ such that $r_k \in g_\kappa$ for all $k \in \N$.\footnote{\label{C} Note that intervals of positive measure are not dense in $\Sp$ because there exist nowhere dense Borel sets $C \sub [0,1]$ of positive measure (for instance closed fat Cantor sets) such that for every interval $[r,s]$, with $r < s$, in the unit interval, the measure of $C \cap [r,s]$ is positive and strictly smaller than $|r-s|$ (hence $[r,s]$ is not $\sub^*$-below $C$). However $\vec{g}$ is without loss of generality composed of intervals because by a density argument there is for every $k$ a set $A \in G$ which is contained in an interval of measure $\le 2^{-k}$. Compare with the Cohen forcing in Remark \ref{rm:Cohen}.} The real $r_G$ is called the \emph{random real}; $r_G$ determines the whole extension $V[G]$ because $G$ is defined (in $V[r_G]$) as the set of Borel sets which contain a Cauchy sequence converging to $r_G$.

The terminology for ``randomness'' of $r_G$ is motivated by another equivalent presentation of this forcing: $\Sp$ is a $\sigma$-algebra of subsets of $2^\omega$ with respect to the product measure $\lambda_\omega$. The space $2^\omega$ can be viewed, in the standard probability interpretation, as the space of ``fair-coin'' tosses (of length $\omega$). In this interpretation $\seq{g_k}{k \in \N}$ determines a new, random, sequence of tosses.

\item For $\Bk$, $\kappa> \omega$, there two main interpretations which we will consider:

\bce[(a)]
\item $\Bk$ gives rise to a sequence of $\kappa$-many random reals (due to projections onto individual coordinates). This is the interpretation in Scott \cite{scott67} for the consistency of the failure of $\CH$, or in Moore \cite{MR1981870} where sequences of random reals have some other special properties (see also Footnote \ref{ft:all}).

\item As we will observe with more details in Sections \ref{sec:KLoeb} and \ref{sec:KMaharam}, if $\CH$ holds and $M$ is nonstandard $\omega_1$-saturated model of arithmetics of size $\omega_1$, then the construction in \Kr \cite{krajicek11} yields a finitely additive measure on some element $\Omega \in M$, with the resulting measure algebra being isomorphic to $\BK$. The generic filter can now be interpreted as determining a new \emph{random integer} added to $M$ (the number $\id_G$ from Theorem \ref{th:st1}, see also Remark \ref{rm:id}).
\ece

\ece
\erm

\section{Probability measure algebras in bounded arithmetics}\label{sec:krajicek}

\subsection{Loeb measures in non-standard analysis}\label{sec:KLoeb}

The idea of using non-standard models to define standard measure algebras was introduced by Loeb \cite{Loeb}.  So called \emph{Loeb measures} are defined in non-standard $\omega_1$-saturated universes $({}^*V,\in)$ via internal  ``counting measures'' on a non-standard natural number in ${}^*\N - \N$ in ${}^*V$. The details of non-standard analysis are beyond the scope of this article. Standard references for this topic are Loeb \cite{Loeb}, and surveys in Cutland \cite{Cutland} or the reference book Loeb--Wolf \cite{loeb2000nonstandard}. For a concise summary related to Maharam types, we recommend Jin and Keisler \cite{jin1992maharam} and Jin \cite{jin1999distinguishing}.

The context of \Kr \cite{krajicek11} is more restricted because the measures are not defined using the whole structure ${}^*V$, but only  the non-standard integer part ${}^*\N$. We will observe that the non-standard analysis results interesting for us still apply in this restricted context.

Suppose the Continuum Hypothesis ($\CH$) holds, and let $M$ be a non-standard $\omega_1$-saturated model of \emph{true arithmetics} of size $\omega_1$ and $\Omega \in M$  a nonstandard number.\footnote{The assumption of $\CH$ allows us to fix an $\omega_1$-saturated model of size $\omega_1$. Without $\CH$, the size of $M$ is at least $2^\omega$. To simplify the presentation, we will assume $\CH$ in what follows.}

\brm \label{rm:log} Since $M$ is a model of true arithmetics, there exists for every $\varphi$ in the language of $M$ an $M$-definable function $c_\varphi$ such that $c_\varphi(n)$ codes the set \begin{equation} \label{code} \set{m \in M}{M\models  m < n \mbox{ and } \varphi(m)},\end{equation} viewed as a sequence of zeros and ones (a characteristic function). If $A \sub M$ corresponds to  a set in (\ref{code}), it is customary to  write $A \in M$ instead of using the appropriate number $ c_\varphi(n) \in M$ and ``decode'' $A$ from it. We write $|A|$ to denote the ``length'' of $A$, i.e., the length of the sequence of zeros and ones defining $A$. The length $|A|$ is viewed as being logarithmically smaller than $A$ (or, more precisely, logarithmically smaller than $c_\varphi(n)$).  In some applications it is critical whether new ``lengths'' are added (see for instance Kraj{\'i}{\v c}ek \cite[p.5]{NPKrajicek}), i.e., whether an extension of $M$ is a \emph{weak end-extension}. We will observe in Lemma \ref{th:dd}  that the extensions $\MGF$ for typical $\mathcal R$'s are far from being weak end-extension because a new integer is inserted between every two ground-model integers with an infinite distance (externally).
\erm

Let \begin{equation}\label{zero} \Apzero= \set{A \sub \Omega}{A \in M} \end{equation} denote the set of all subsets of $\Omega$ which are finite from the point of $M$. Note that $\Apzero$ is not a $\sigma$-algebra of sets (otherwise the set of standard natural numbers $\N$ would be definable in $M$). 

\brm
In the non-standard analysis setting of ${}^*V$, $\Apzero$  is defined as a collection of ``hyperfinite subsets of $\Omega$'',  i.e., those which are finite from the point of ${}^*V$. The same concept is equivalently expressed in (\ref{zero}) by considering subsets of $\Omega$ coded by some element in $M$. Note that the coding can be iterated for every $k \in \N$, obtaining correspondence between $M$ and the union of internal powersets of $M$ iterated through $\N$ (we will not need this iteration here).
\erm

For $m, n \in M$, let us write $|m -n|$ for the $M$ cardinality of the interval with end-points $m$ and $n$.

\begin{definition}
Suppose $n\le m$ are non-standard numbers. Let us write $\st(\frac{n}{m})$ for the \emph{standard part} of the $M$-fraction $\frac{n}{m}$. It is defined as the unique real $r \in [0,1]$ which is the limit of a sequence $\seq{\frac{l_k}{k}}{ k, l_k \in \N, k \neq 0, l_k <k}$ such that for all $k \in \N$, $k \neq 0$, $\frac{l_k}{k} \le \frac{n}{m} \le \frac{l_k+1}{k}$.
\end{definition}

\begin{definition}\label{def:measure}
Let $\mup$ be the \emph{counting measure} which assigns to each $A \in \Apzero$ the standard part of of the $M$-rational $\frac{|A|}{|\Omega|}$, i.e.\ $$\mup(A) = \st(\frac{|A|}{|\Omega|}).$$
\end{definition}

\begin{definition}
Let $$\Iup = \set{A \in \Apzero}{\mup(A) = 0}.$$
\end{definition}

Notice that $\Iup$ is equivalently defined as the set of all $A \in \Apzero$ such that $$\frac{|A|}{|\Omega|} < \frac{1}{k} \mbox{, equivalently }k\cdot |A| < |\Omega|, \mbox{ for every $k \in \N, k \neq 0$.}$$ 

\begin{definition}
If $A \in \Iup$, we sometimes say that the number $|A|$ is \emph{infinitesimally small}, or just \emph{infinitesimal} (with respect to $\mup$).
\end{definition}

$\Iup$ is not a $\sigma$-ideal: all standard initial segments $[0,k]$ for $k \in \N$ are in $\Iup$, but their union $\N$ is not an element of $M$.

By saturation of $M$, see \cite[Section 1.2]{krajicek11}, the quotient Boolean algebra \begin{equation}\label{eq:BB} \BB := \Apzero/ \Iup \end{equation} is a ccc Boolean algebra which is a probability measure algebra with respect to the measure $\mup$, extended to $\BB$:   $$\mup([A]_{\Iup}) := \mup(A).$$
Let \begin{equation}\label{eq:AA} (\Ap,\sub^*) \end{equation} denote the preorder corresponding to $\BB$ in the sense of Definition \ref{def:plus}.

\brm \label{rm:extend} Alternatively, it is possible to first extend the measure space $(M, \Apzero, \mup)$ into a $\sigma$-additive measure space (externally with respect to $M$), whose associated measure algebra is the same measure algebra as $\BB$. This intermediate step is not necessary but allows one to remain in the context of measure spaces (avoiding the use of measure algebras). See the discussion in Jin and Keiser \cite[\S 1]{jin1992maharam} (and the references mentioned there).
\erm

\brm
The measure $\mup$ is generated by $M$-finite intervals: Every  $A \in M$ is a union of intervals, and since $A \in M$, the number of intervals is an element of $M$ (i.e.\ it is finite from the point of $M$). However, the external cardinality of the number intervals composing a given $A \in M$ may be uncountable and, as we will observe in Theorem \ref{th:BBtype}, no countable subfamily of intervals suffices to generate the measure. This contrasts with the Lebesgue measure on the reals which is generated by countably many intervals with rational endpoints. In this sense, the probability algebra $\BB$ is \emph{non-separable}, due to Theorem \ref{th:BBtype}.
\erm

Let us end the section by stating a few facts about the linear order $(M,<)$ and the relationship with the measure $\mup$. 

\begin{definition}
For $m < n \in M$, let us write $[m,n]$, $[m,n)$, etc. for the intervals determined by $m,n$ (closed, open, half-open, etc). Let us write $m \sim n$ if and only if the distance $|m-n|$ is infinitesimal. Finally, let $[x]_\sim$ be the equivalence class of $x$ with respect to $\sim$.
\end{definition}

For every non-standard $x \in M$, let $\Z_x$ be the \emph{$\Z$-chain} generated by $x$ (the set $\set{x + k}{k \in \Z}$). Clearly, $\Z_x \sub [x]_\sim$ for every $x$. In Lemma \ref{lm:M-order}(\ref{m3}) we show that $[x]_\sim$ strictly bigger than $\Z_x$.

\begin{lemma}\label{lm:M-order}
Suppose $M$ is an $\omega_1$-saturated model of true arithmetics of size $\omega_1$ and $\Omega$ is as above. The following hold:
\bce[(i)]
\item The order $(M,<)$ is above the standard numbers composed of densely linearly ordered copies of $\Z$ (we call them $\Z$-chains). Moreover, for all non-standard $m\neq n$, with $|m-n|$ infinite, the external cardinality of the interval $[m,n]$ is $\omega_1$.
\item The linear order $([x]_\sim,<)$ on $\Omega$, with the order induced by $(M,<)$, is isomorphic via the standard part function to the unit interval on the reals with the standard order.
\item \label{m3a} The ideal $\Iup$ is closed under addition but not under multiplication: If $m = \lfloor \sqrt{|\Omega|} \rfloor$, then the $M$-union of $m$-many pairwise disjoint subsets of $\Omega$ each of size $m$ has measure 1 even though each set has measure $0$ in $\mup$. More generally, whenever $mn + l = |\Omega|$ for some non-standard  numbers $m,n$, with $l <n$, then $m,n,l$ are infinitesimally small and $M$ can be in $M$ partitioned into $m$-many disjoint intervals $\set{I_i}{i<m}$ of size $n$, with a remainder set of size $l$, such that $\mup(I_i) = 0$ for all $i < m$ and $\mup(\set{A_i}{i<m}) = 0$. 
\item \label{m3} If $mn+l = |\Omega|$, for some non-standard $m,n$ and $l<n$, then for every $x \in \Omega$, $x \sim x+n'$, for $n' = \max(m,n)$. In particular, the equivalence classes $[x]_\sim$ are strictly bigger than the $\Z$-chains.
\ece
\end{lemma}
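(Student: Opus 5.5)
We prove the four items separately. Throughout we use that $M$ satisfies true arithmetic, so every first-order fact about $\N$ (division with remainder, monotonicity of fractions, existence of square roots) transfers to $M$, together with $\omega_1$-saturation.

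For (i), the decomposition into $\Z$-chains is the standard argument for nonstandard models of $\mathsf{PA}$. Each nonstandard $x$ lies in the chain $\Z_x$. The chains are densely ordered: given $x<y$ in distinct chains, the element $\lfloor (x+y)/2\rfloor$ lies in neither $\Z_x$ nor $\Z_y$. There is no greatest chain (take $2x$) and no least nonstandard chain (take $\lfloor x/2\rfloor$). For the cardinality claim, fix $m<n$ with $|m-n|$ nonstandard. Any countable set $\{a_k\}\subseteq[m,n]$ is omitted by the countable type $\{m\le v\le n\}\cup\{v\ne a_k : k\in\N\}$. This type is finitely satisfiable because the interval is $M$-infinite, so by $\omega_1$-saturation it is realised. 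Hence $[m,n]$ is uncountable, and since $|M|=\omega_1$ it has size exactly $\omega_1$.

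For (ii), I read the statement as concerning the quotient $(\Omega/\!\sim,<)$. The order is well defined because each class $[x]_\sim$ is convex. Consider the map $[x]_\sim\mapsto \st(x/|\Omega|)$.
\begin{itemize}
\item It is well defined and injective, since $x\sim y$ holds iff $\st(|x-y|/|\Omega|)=0$, that is, iff the two standard parts agree.
\item It is order preserving, because $x\le y$ implies $x/|\Omega|\le y/|\Omega|$.
\item It is onto $[0,1]$: given a real $r$, the type $\{\, l_k/k \le v/|\Omega| \le (l_k+1)/k : k\in\N \,\}$, with $l_k/k$ the dyadic-style approximations of $r$, is countable and finitely satisfiable in $\Omega$. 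By saturation it is realised by some $x$, and then $\st(x/|\Omega|)=r$.
\end{itemize}

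For (iii), closure under addition follows from $\st(a+b)=\st(a)+\st(b)$ for bounded $M$-rationals. For the decomposition, suppose $mn+l=|\Omega|$ with $m,n$ nonstandard and $l<n$. Then $n/|\Omega|\le 1/m<1/k$ for every standard $k$, so $n$ is infinitesimal; symmetrically $m$ is infinitesimal, and so is $l<n$. The intervals $I_i=[in,(i+1)n)$ for $i<m$ are $M$-definable, so the partition exists in $M$. Each $I_i$ has $\mup$-measure $0$, while their $M$-union has measure $\st(mn/|\Omega|)=1-\st(l/|\Omega|)=1$. I read the final clause of the statement as asserting that the remainder set has measure $0$. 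Taking $m=\lfloor\sqrt{|\Omega|}\rfloor$ is the special case that gives the square-root example. This shows $\Iup$ is not closed under $M$-finite unions of nonstandard length, which is the intended sense of ``not closed under multiplication''.

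For (iv), since $n'=\max(m,n)$ is infinitesimal by (iii), we get $|x-(x+n')|=n'$ infinitesimal, hence $x\sim x+n'$. Because $n'$ is nonstandard, $x+n'\notin\Z_x$, so $[x]_\sim\supsetneq\Z_x$.

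The only step requiring care is the surjectivity part of (ii): one must phrase the type with $M$-rational inequalities so that it is first-order and countable. Everything else is a transfer of elementary arithmetic.
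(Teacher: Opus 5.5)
Your proof is correct and follows the same route as the paper. Items (i) and (ii) go by the standard saturation arguments, which the paper only cites; (iii) goes by division with remainder in $M$ plus the observation that a nonstandard factor of $|\Omega|$ is infinitesimal; and (iv) is an immediate consequence of (iii).

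There are two small points to tighten, neither a real gap:
\begin{itemize}
\item In the surjectivity argument for (ii), fix $l_k=\lfloor kr\rfloor$ (and $l_k=k-1$ when $r=1$). An arbitrary admissible choice can make a finite part of the type force $v/|\Omega|$ to equal a single rational, for example $[1/2,1]\cap[1/4,1/2]=\{1/2\}$. That value is not attained when, say, $|\Omega|$ is odd, so finite satisfiability would fail.
\item For $m=\lfloor\sqrt{|\Omega|}\rfloor$, the remainder $|\Omega|-m^2$ can be as large as $2m$. So this is not literally the case $l<n$ of the general claim. The remainder is still infinitesimal, so the conclusion stands; the paper makes the same slip.
\end{itemize}
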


\begin{proof}
The items (i) and (ii) are standard, see for instance \cite{Cutland} for proofs (the external cardinality $\omega_1$ of $[m,n]$ with infinite distance follows from $\omega_1$-saturation and $|M| = \omega_1$).

Let us prove (\ref{m3a}). It is clear that $\Iup$ is closed under addition. Let show that it is not closed under multiplication. If $mn \le |\Omega|$ and $m,n$ are non-standard, then  $mk < |\Omega|$ and $nk < |\Omega|$ for every $k\in \N$, and hence $m,n$ are infinitesimal. Now, it is a theorem of arithmetics that whenever $0 \not = n < o$, then $o = mn + l$ for some $m$ and $l < n$ (division of $o$ by $n$ with remainder $l$),  and $o$ can be divided into $m$-many consecutive intervals of size $n$ with remainder of size $l$. The same must hold in $M$ since it is a model of true arithmetics. It follows there exists a  partition $\set{I_i}{i<m}$ definable inside $M$ as required by the lemma. In particular, if $m = \lfloor \sqrt{|\Omega|} \rfloor$ and $|\Omega| = m^2 + l$, then $m$ is infinitesimal, yet $\Omega$ can be partitioned into $m$-many intervals with size $m$ with a remainder of size $l$. Since $l < m$, the remained must be infinitesimal, and hence the measure of the union of $m$-many intervals of size $m$ must have measure $1$.

Item (\ref{m3}) is a consequence of (\ref{m3a}). \end{proof}

We will need Lemma \ref{lm:M-order}(\ref{m3}) later on in Theorems \ref{th:dd} and \ref{th:dense} when we analyse the extent of the mutual density of $(M,<)$ and its extension $(\MGF,<)$.

\brm
We will treat $\Omega$ as an initial segment of $M$ in the following sections because the more general situation can be easily reduced to this case: Every $\Omega$ used in applications in \Kr \cite{krajicek11} is a subset of $\max \Omega$ of positive measure. Forcing with $\Ap$ amounts to forcing with $\mathcal A^+_{M,[0,\max \Omega]}$ below the condition $\Omega$.  By Theorem \ref{th:BBtype}, both forcings are isomorphic with $\BK$ (and therefore with each other). The random variables defined on $[0,\Omega]$ can be naturally restricted to the smaller domain $\Omega$ as well.
\erm

\subsection{The Maharam type of \Kr probability algebra}\label{sec:KMaharam}

In the context of non-standard analysis mentioned in the previous section,  Jin and Keiser prove in \cite[Lemma 2.2] {jin1992maharam} that the Loeb measure defined on a non-standard natural number $\Omega$ is homogeneous, and in Theorem 2.1 that  its Maharam type is equal to the external cardinality of the internally computed power $2^{|\Omega|}$, where $|\Omega|$ is the size of $\Omega$ in $M$.

Let us transfer this result into the context of the measure algebra $\BB$.

\begin{lemma}\label{lm:homo}
Suppose $\BB$ a probability measure algebra defined in (\ref{eq:BB}). Then $\BB$ is Maharam-type homogeneous.
\end{lemma}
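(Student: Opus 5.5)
We show that every non-trivial principal ideal of $\BB$ is isomorphic to $\BB$ as a Boolean algebra, and then appeal to Fact~\ref{f:homo}. This is the route of Jin and Keisler \cite[Lemma 2.2]{jin1992maharam}, transferred to $M$.

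\textbf{Reduction to internal sets.} Let $y \neq 0$ in $\BB$. By (\ref{eq:BB}), $y = [B]_{\Iup}$ for some $B \in \Apzero$ with $\mup(B) = r > 0$. The principal ideal $\mathcal{B}_y$ is then canonically isomorphic to $\set{A \in M}{A \sub B}/(\Iup \cap \mathcal P(B))$. Its measure is the counting measure of $B$ rescaled by the factor $1/r$, namely $A \mapsto \st(|A|/|B|)$. So $\mathcal{B}_y$ is isomorphic to the algebra $\mathcal{B}_{M,B}$ that Definition~\ref{def:measure} attaches to $B$ in place of $\Omega$.

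Since $M$ is a model of true arithmetics, $M$ contains an order-isomorphism $h$ from $B$ onto the initial segment $[0,|B|)$. Because $h \in M$, it maps $M$-coded subsets to $M$-coded subsets in both directions and preserves internal cardinalities. Hence $\mathcal{B}_{M,B} \cong \mathcal{B}_{M,|B|}$. It therefore suffices to prove that $\mathcal{B}_{M,n} \cong \mathcal{B}_{M,\Omega}$ whenever $n \le \Omega$ and $\st(n/\Omega) > 0$.

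\textbf{Comparing $n$ and $\Omega$.} Homogeneity is equivalent to saying that every principal ideal has the same Maharam type as the whole algebra. So I would aim to show that $\tau(\mathcal{B}_{M,n})$ depends only on the external cardinality of the internal power $2^{n}$, and then check that $2^n$ and $2^\Omega$ have the same external cardinality.

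For the second claim, note that $\Omega \le k n$ for some standard $k$. Then $2^n \le 2^\Omega \le (2^n)^k$ internally. Under our assumptions both sets are internal, infinite and of external cardinality $\omega_1$, by $\omega_1$-saturation and $|M| = \omega_1$.

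For the first claim, I would prove $\tau = \omega_1$ for every such algebra.
\begin{itemize}
\item \emph{Upper bound.} All elements of $\mathcal{B}_{M,n}$ are classes of internal sets, and there are only $\omega_1$ of these, so $\tau \le \omega_1$.
\item \emph{Lower bound.} It suffices to show that no countable family $D$ completely generates the algebra. Every element of the $\sigma$-algebra generated by $D$ is approximated in measure by finite Boolean combinations of members of $D$. By $\omega_1$-saturation there is an internal set $A \sub n$ that is ``independent'' of countably many given internal sets: for each of the countably many finite Boolean combinations $C$, we require $|A \cap C| \approx |C|/2$ up to $1/k$ for every standard $k$. This is a countable type that is finitely satisfiable by a counting argument in true arithmetics, since a random subset has this property with high probability. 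Such an $A$ has distance $\ge 1/4$ (in measure) from every finite combination, so $[A]$ is not in the complete subalgebra generated by $D$.
\end{itemize}

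Once both algebras have the same type $\omega_1$ in every principal ideal, the lemma follows directly from Definition~\ref{def:homo}.

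\textbf{Main obstacle.} The main obstacle is the lower bound on $\tau$: showing that the countable type expressing independence from a countable family of internal sets is finitely satisfiable, and handling the passage from the complete subalgebra generated by $D$ to approximation by finite combinations. I would try first to use the concentration (Chernoff) bound inside $M$ for finitely many sets simultaneously, and then apply saturation.
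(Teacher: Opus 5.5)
Your proof is correct, but it is not the paper's route. The paper gives no argument of its own: it only asserts that Jin and Keisler's homogeneity proof \cite[Lemma 2.2]{jin1992maharam} (cf.\ \cite[Lemma 3.1]{jin1999distinguishing}) goes through for $\BB$. That argument needs no hypothesis on $|M|$, and it sits alongside their computation of the type as the external cardinality of the internal power $2^{|\Omega|}$, which rests on Shelah's combinatorial lemma.

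You instead reduce every principal ideal to some $\mathcal{B}_{M,n}$ with $n$ nonstandard and compute $\tau=\omega_1$ outright. The upper bound comes for free from $|M|=\omega_1$, and the lower bound from a saturation argument. This bypasses Shelah's lemma and yields $\tau(\BB)=\omega_1$ of Theorem~\ref{th:BBtype} along the way. The price is dependence on the cardinality hypothesis. The argument adapts to $M$ that is $\kappa$-saturated of size $\kappa$, but for $M$ larger than its degree of saturation you would need the general form of your first claim, i.e.\ the Jin--Keisler computation, which you do not prove.

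Consequently your comparison of $2^n$ with $2^\Omega$ is redundant. The step you flag as the main obstacle is also routine:
\begin{itemize}
\item Finite satisfiability needs no Chernoff bound. For internal $C_1,\dots,C_m\subseteq n$, put $\lfloor |P|/2\rfloor$ points of each atom $P$ of the partition they generate into $A$. Then every $|A\cap C_j|$ is within $2^{m-1}$ of $|C_j|/2$, a standard error.
\item The realized $A$ satisfies $\mu(A\triangle C)=\frac12$ in the normalized measure for every $C$ in the countable algebra generated by $D$, since $A$ halves both $C$ and its complement.
\item The metric closure of a subalgebra of a probability algebra is closed under countable suprema, hence by ccc is a complete subalgebra. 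So the complete subalgebra generated by $D$ lies in the closure of the finite Boolean combinations of $D$, and $[A]$ is at distance $\frac12$ from all of these.
\end{itemize}
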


\begin{proof}
It is easy to check that the argument in Jin and Keisler \cite[Lemma 2.2]{jin1992maharam} proves the homogeneity of $\BB$ in the sense of Definition \ref{def:homo}. See also the proof of this lemma in Jin  \cite[Lemma 3.1]{jin1999distinguishing} which explicitly mentions Boolean (probability) algebras and moves between the measure spaces and the associated probability algebras.
\end{proof}

\begin{theorem}\label{th:BBtype}
Suppose $\CH$ holds and $M$ is an $\omega_1$-saturated model of true arithmetics of size $\omega_1$. Then the Maharam-type of $\BB$ is $\omega_1$, i.e.\ $\tau(\BB) = \omega_1$. It follows that $\BB$ is isomorphic to $\BK$ as a measure algebra.
\end{theorem}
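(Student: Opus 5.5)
The plan is to prove $\tau(\BB) = \omega_1$ by establishing the two inequalities $\tau(\BB) \le \omega_1$ and $\tau(\BB) \ge \omega_1$. The isomorphism with $\BK$ then follows at once: by Lemma \ref{lm:homo}, $\BB$ is Maharam-type homogeneous, and it is infinite (for instance, the intervals $[0, \lfloor \Omega/2^k \rfloor)$ give infinitely many distinct elements). So Theorem \ref{th:Mah} yields an isomorphism with some $\mathcal{B}_\kappa$, and since $\tau(\mathcal{B}_\kappa) = \kappa$ (a standard fact, Fremlin 331K), we get $\kappa = \tau(\BB) = \omega_1$.

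The upper bound is easy. Every element of $\BB$ has the form $[A]_{\Iup}$ with $A \in \Apzero$, so $\BB$ is generated, even as an ordinary algebra, by the classes of the sets $A \in \Apzero$. Each such $A$ is coded by an element of $M$, and $|M| = \omega_1$, so $|\BB| \le \omega_1$. Hence $\BB$ itself is a generating set of size at most $\omega_1$, and $\tau(\BB) \le \omega_1$.

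The lower bound is the main step: we must show no countable $D \subseteq \BB$ completely generates $\BB$. I would follow Jin--Keisler \cite[Theorem 2.1]{jin1992maharam} and use $\omega_1$-saturation.

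\begin{enumerate}[(1)]
\item Fix a countable $D = \{[A_k]_{\Iup} : k \in \N\}$ with $A_k \in \Apzero$. The complete subalgebra generated by a countable set in a probability algebra is the $\sigma$-completion of the countable algebra it generates. Every element of that completion is approximated in measure by finite Boolean combinations of the $A_k$, since the metric $\mup(x \triangle y)$ makes the closure of a subalgebra complete and the closure is then a complete subalgebra.
\item So it suffices to find $B \in \Apzero$ with $\mup(B \triangle C) \ge \tfrac14$ for every finite Boolean combination $C$ of the $A_k$. Each such $C$ is again in $M$, and there are countably many of them, $C_0, C_1, \dots$.
\item I would take $B$ to be a ``fair-coin'' set independent of all the $C_j$. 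Consider the countable type in the variable $B$ (a coded subset of $\Omega$) containing, for each $j$ and each standard $k \ge 1$, the formula $\bigl| \, |B \cap C_j| - |C_j|/2 \, \bigr| < |\Omega|/k$.
\item This type is finitely satisfiable in $M$. For finitely many $C_j$, refine to the atoms of the finite partition they generate, and let $B$ take every other element within each atom. This uses only that $M$ is a model of true arithmetic.
\item By $\omega_1$-saturation, $B$ realizes the whole type. Then $\mup(B \cap C_j) = \mup(C_j)/2$, so $\mup(B \triangle C_j) = \mup(C_j)/2 + (1 - \mup(C_j))/2 = \tfrac12$ for all $j$.
\item By step (1), $[B]_{\Iup}$ is therefore not in the complete subalgebra generated by $D$, so $\tau(\BB) > \omega$.
\end{enumerate}

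The obstacle I anticipate is step (1): correctly justifying that complete generation by a countable set only reaches elements approximable by countable combinations, so that the single witness $B$ suffices. I would handle this via the metric-closure characterization of closed subalgebras of measure algebras.
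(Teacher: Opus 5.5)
Your proof is correct, but it takes a different route from the paper. The paper does not split the statement into two inequalities. Instead it imports Jin--Keisler's theorem that the Maharam type of the counting-measure algebra on $\Omega$ equals the external cardinality of the internal power set $2^{|\Omega|}$. It asserts that their nonstandard-analysis proof transfers to $\BB$ because its combinatorial core, a probabilistic lemma of Shelah about finite sets, holds in $M$ as a model of true arithmetic. It then evaluates that cardinality as $\omega_1$ by squeezing it between the external cardinality of $\Omega$ (uncountable by saturation) and $|M| = \omega_1$, and finishes with Maharam's theorem as you do.

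You replace the Jin--Keisler/Shelah input with two simpler ingredients:
\begin{itemize}
\item the counting bound $|\BB| \le |M|$;
\item a one-set saturation argument. The countable type asking $B$ to halve every finite Boolean combination of $D$ is finitely satisfied by the alternating construction on atoms. So $[B]_{\Iup}$ lies at distance $\tfrac12$ from the algebra generated by $D$, hence outside its metric closure. That closure contains the complete subalgebra generated by $D$, by closure under countable increasing suprema plus ccc.
\end{itemize}
For $\mup(B \setminus C_j) = (1-\mup(C_j))/2$ you are implicitly using that $\Omega \setminus C_j$ is again some $C_{j'}$, which holds.

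Your version is self-contained and elementary. It avoids having to check that a proof written for Loeb measures in ${}^*V$ really goes through for $\BB$, which the paper only asserts. It also adapts directly to the $\kappa$-saturated models of size $\kappa$ mentioned after the theorem. What it does not give is the value of $\tau(\BB)$ in general: when $|M|$ exceeds the degree of saturation, your bounds only yield $\omega_1 \le \tau(\BB) \le |M|$. The Jin--Keisler result instead pins $\tau(\BB)$ down as the external cardinality of the internal power set of $\Omega$.
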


\begin{proof}
By $\omega_1$-saturation of $M$, the external cardinality of $\Omega$ is uncountable, and hence equal to $\omega_1$ since $M$ has cardinality $\omega_1$. The proof in \cite[Theorem 2.1]{jin1992maharam} uses a probabilistic argument for natural numbers due to Shelah \cite[Lemma 2.3]{jin1992maharam}. Since $M$ is a model of true arithmetics, this result holds in $M$. It follows that the argument in \cite[Theorem 2.1]{jin1992maharam} goes through for $\BB$. Since the external cardinality of $2^{|\Omega|}$ is greater or equal to the external cardinality of $\Omega$, it follows that the external cardinality of $2^{|\Omega|}$ is $\omega_1$. It follows that $\tau(\BB) = \omega_1$, and hence $\BB$ is isomorphic as a measure algebra to $\BK$ by Maharam Theorem \ref{th:Mah}.
\end{proof}

Note that if $\GCH$ holds and $M$ is a model of true arithmetics which is $\kappa$-saturated of size $\kappa$, for a regular uncountable $\kappa$, then we similarly obtain that $\tau(\BB) = \kappa$, with $\BB$ being defined with respect to this $M$.

Thus in generic extensions for the forcing $\BS$ used by Scott \cite{scott67}, the following hold:\footnote{See Section \ref{sec:a} for the definition of generic models of bounded arithmetics in $V[G]$.}

\begin{corollary}\label{cor:BBtype}
Suppose $2^\omega = \omega_1$ and $2^\omega = \omega_2$ hold and let $G$ be a $V$-generic for $\BS$, then:
\bce[(i)]
\item $V[G]$ satisfies $2^\omega = \omega_2$.
\item If $\BB$ is defined as in (\ref{eq:BB}), then $G \cap \BK$ is $V$-generic for $\BK \cong \BB$. 
\item If $\BB$ is defined with respect a model of true arithmetics $M$ of size $\omega_2$ which is $\omega_2$-saturated, then $G$ is $V$-generic for $\BB$.
\ece
In particular the model $V[G]$ contains  non-standard models of size $\omega_1$ and $\omega_2$ of weak arithmetics which witnesses independence of statements discussed in \Kr \cite{krajicek11}.
\end{corollary}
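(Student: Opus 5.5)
The hypothesis as printed ("$2^\omega = \omega_1$ and $2^\omega = \omega_2$") is evidently a typo. I read it as $\CH$ together with $2^{\omega_1} = \omega_2$ in $V$. The first gives an $\omega_1$-saturated model $M$ of true arithmetics of size $\omega_1$. The second gives an $\omega_2$-saturated model of size $\omega_2$; this is really what item (iii) needs, via the $\GCH$-type remark after Theorem \ref{th:BBtype}.

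Item (i) is the classical Scott/Cohen computation for measure algebras. $\BS$ is ccc, so cardinals are preserved. The $\omega_2$ coordinate projections give $\omega_2$ pairwise distinct random reals, hence $2^\omega \ge \omega_2$ in $V[G]$. For the upper bound I count nice names for subsets of $\omega$. Each is given by countably many countable antichains in $\BS$, and $|\BS| = \omega_2^{\omega} = \omega_2$ under the assumed arithmetic. So there are at most $\omega_2^{\omega} = \omega_2$ names, and $2^\omega = \omega_2$.

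Item (ii) uses the standard fact that $\BK$ is a complete subalgebra of $\BS$. Identify $\BK$ with the measure algebra of Baire/measurable sets depending only on coordinates in $\omega_1 \subseteq \omega_2$. This embedding preserves measure and hence arbitrary (countable, by ccc) suprema, so it is a complete embedding. Therefore $G \cap \BK$ is $V$-generic for $\BK$. The argument is that any maximal antichain of $\BK$ in $V$ stays maximal in $\BS$; this is checked via the projection (conditional-expectation) map $b \mapsto$ the least element of $\BK$ above $b$, which is nonzero for $b \neq 0$ because it has measure at least $\bar\lambda(b)$. Then compose with the isomorphism $\BK \cong \BB$ from Theorem \ref{th:BBtype}, which lives in $V$ and so transports generics.

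Item (iii) repeats the proof of Theorem \ref{th:BBtype} with $\omega_2$ in place of $\omega_1$. Lemma \ref{lm:homo} gives homogeneity; its proof uses only $\omega_1$-saturation, which still holds. By \cite[Theorem 2.1]{jin1992maharam}, the Maharam type is the external cardinality of $2^{|\Omega|}$ computed in $M$. This is at least $|\Omega|_{\mathrm{ext}} = \omega_2$ by $\omega_2$-saturation, and at most $|M| = \omega_2$. Maharam's Theorem \ref{th:Mah} then yields $\BB \cong \BS$ in $V$, so $G$ (transported) is $\BB$-generic.

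The final clause follows by applying the construction of generic extensions $\MGF$ (Section \ref{sec:a}, Theorem \ref{th:complete}) to both generics. The main obstacle is justifying in (iii) that Jin–Keisler's computation goes through at $\omega_2$, specifically that their Shelah-style lower bound on the type really reaches the full external size. I would verify this by adapting their argument: produce $\omega_2$ internally independent sets of measure $\tfrac12$ using saturation, which is possible since each countable family of constraints is realized.
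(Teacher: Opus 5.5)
Your proposal is correct and is essentially the argument the paper has in mind (the paper gives no separate proof): (i) is Scott's standard ccc and nice-name count, (ii) is the complete embedding $\BK \hookrightarrow \BS$ composed with Theorem~\ref{th:BBtype}, and (iii) is the remark after Theorem~\ref{th:BBtype} at $\kappa=\omega_2$ plus Maharam's theorem, with the hypothesis read, as you do, as $\CH$ together with $2^{\omega_1}=\omega_2$. The obstacle you flag at the end is not really one, since the Jin--Keisler computation uses only $\omega_1$-saturation and an arithmetic lemma true in $M$, so it yields $\tau(\BB)=\omega_2$ directly; if you instead build $\omega_2$ almost-independent sets by recursion, the stages past $\omega_1$ must realize types over $\omega_1$ parameters (so you genuinely need $\omega_2$-saturation, not merely realization of countable types), and you only need independence up to infinitesimals, not exact internal independence.
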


Importantly, while the forcings are just of the form $\Bk$, they are used very differently in Scott \cite{scott67} and \Kr \cite{krajicek11}. The key difference is structure of the \emph{random variables} which add the new elements of the intended type: a new real number in \cite{scott67}, and a new non-standard integer in \cite{krajicek11}. A random variable in Scott \cite{scott67} is defined with respect to the  measure algebra $\mathcal{B}_\omega$. The algebra $\BS$ is used only as a product space to add an $\omega_2$-sequence of random reals added by $\mathcal{B}_\omega$ on each coordinate. In contrast, a random variable in \Kr \cite{krajicek11} is defined for the whole measure algebra $\BK$ simultaneously: one can view it as specifying where the new integers fit among the $\omega_1$-many ``ground-model'' integers in $M$ (see Remark \ref{rm:id} Theorems \ref{th:dd} and \ref{th:dense}).

\subsection{Random variables}\label{sec:a} \label{sec:Omega}

In this section, we review the definition and basic properties of random variables $\mathcal R$ from \Kr \cite{krajicek11} which can be naturally interpreted as names for the domain of the generic extensions $\MGF$.

\brm\label{rm:2nd}
Theories of bounded arithmetics are typically two-sorted. Models defined in \cite{krajicek11} are therefore two-sorted as well: they are composed of numbers, corresponding to \emph{random variables}, and relations on the domain, corresponding to \emph{random functionals}.\footnote{Functionals correspond to the second level of the cumulative hierarchy of names built on top of the first level of random variables. The same approach was implemented by Scott in \cite{scott67} who built a second-order theory of the reals $\R$ in which the Continuum Hypothesis problem is expressible.} We will consider only the first sort of objects, i.e., non-standard numbers, because this is sufficient for our purposes (and the inclusion of random functionals would make the article excessively long).  However, for applications in bounded arithmetics random functionals are essential (see Section \ref{sec:refine} for a few more details on random functionals).
\erm

\begin{definition}\label{def:Krandom}
We say that a function $$\alpha: \Omega \to M$$ is a \emph{random variable} if $$\alpha \in M.$$
\end{definition}

The assumption $\alpha \in M$ in particular implies that 

 \begin{equation} \label{k:atom}\set{x \in \Omega}{M \models \varphi(\alpha_0(x), \ldots, \alpha_{n-1}(x))} \in \Ac \end{equation} for every quantier-free formula $\varphi$ in the language $\mathcal L_{all}$ of arithmetics which includes the constants for $M$ and symbols for all relations and functions on $M$.

In applications, proper subsets of all random variables are used (see Section \ref{sec:ex2}).

\begin{definition}\label{def:F}
Let $\mathcal L$ be some fixed language (such as $\mathcal L_{all}$ mentioned in the previous paragraph). 
\bce[(i)]
\item Let $\Fa$ denote the collection of all random variables according to Definition \ref{def:Krandom}.
\item In general, let $\mathcal R$ a variable for a collection of random variables which satisfies the condition that is closed under all $\mathcal L$-functions and contains all $\mathcal L$-constants.
\ece
\end{definition}

The parameter $\mathcal R$ is not relevant for the definition of the forcing $\Ap$, but determines the resulting generic extensions of $M$, which we will denote $\MGF$ (see (\ref{m:1}) below).

Suppose $V, M$ and $\Omega$ are as above and let $\BB$ from (\ref{eq:BB}) be the corresponding probability measure algebra. Let $(\Ap,\sub^*)$ denote the corresponding partial order (\ref{eq:AA}). Suppose $\mathcal R$ is fixed as well. Let $G$ be a $V$-generic filter for $(\Ap, \sub^*)$. 

Working in $V[G]$, define an equivalence on random variables in $\mathcal R$ by \begin{equation}\label{def:a} \alpha \equiv \beta \mbox{ if and only if }\set{x \in \Omega}{\alpha(x) = \beta(x)} \in G.\end{equation}

Let us denote the equivalence classes determined by $\equiv$ by $\alpha_G$. Since these classes are in $ V[G]$, they have some $\BB$-names $\dot{\alpha}$. To keep the notation as simple as possible, we will identify random variables $\alpha$ with names for their respective equivalence classes. Boolean values $||\varphi||$ of formulas with names $\alpha, \ldots$ are defined recursively (\Kr \cite[page 13]{krajicek11}); using our notation with names, the definition reads as follows:

\bce[(i)]
\item $||\alpha = \beta|| := \set{x \in \Omega}{\alpha(x) = \beta(x)}/\Iup$.
\item $||R(\alpha_0, \ldots, \alpha_{k-1})|| := \set{x \in \Omega}{M \models R(\alpha_0, \ldots, \alpha_{k-1})}/\Iup$, for any $\mathcal L$-relation $A$.
\item $||\ldots||$ commutes with $\wedge, \vee, \neg$.
\item $||\exists x \varphi(x)|| := \bigvee_{\alpha \in F} ||\varphi(\alpha)||$.
\item $||\forall x \varphi(x)|| := \bigwedge_{\alpha \in F} ||\varphi(\alpha)||$.
\ece

Using the standard context of set-theoretic forcing, one can define the usual forcing relation as follows, for $p \in BB$:

\begin{equation}\label{k:1} p \Vdash \varphi(\alpha, \ldots) \mbox{ if and only if } p \le ||\varphi(\alpha, \ldots)||.\end{equation}

Note that for every \emph{quantifier-free} formula $\varphi$, \begin{equation} \label{k:2} p \Vdash \varphi(\alpha, \ldots) \mbox{ if and only } p \le \set{x \in \Omega}{\varphi(\alpha(x), \ldots)}/\Iup.\end{equation}

Let us define a model $\MGF$ in $V[G]$ by postulating 
\begin{equation} \label{m:1} \MGF := \set{\alpha_G}{\alpha \mbox{ is a random variable in $\mathcal R$}}. \end{equation}

\Krr's analysis carried out in terms of Boolean values shows---using the correspondence in (\ref{k:1})---that $\MGF$ is a model of an appropriate theory of bounded arithmetics. 

For every $n \in M$ let $\alpha_n: \Omega \to M$ be constantly $n$. We identify the set $$\set{(\alpha_n)_G}{n \in M, \alpha_n \mbox{ is constantly }n}$$ with $M$. If $\mathcal R$ contains every constant function $\alpha_n$, we obtain $M \sub \MGF$.

\begin{definition}
We write $n$ for $(\alpha_n)_G$ if there is no danger of confusion.
\end{definition}

The following is a consequence of (\ref{k:2}), formulated in terms of generic models $\MGF$:

\begin{lemma}\label{fact:m}
Suppose the set $\set{x \in \Omega}{\varphi(\alpha_0(x), \alpha_1(x) \ldots)}$ is in $\Ap$ for some random variables $\alpha_0, \alpha_1, \ldots$ in $\mathcal R$, where $\varphi$ is a quantifier-free formula. Then if $G$ is $V$-generic for $\Ap$ and contains the condition $\set{x \in \Omega}{\varphi(\alpha_0(x), \alpha_1(x) \ldots)}$, then $$\MGF \models \varphi((\alpha_0)_G, (\alpha_1)_G \ldots).$$
\end{lemma}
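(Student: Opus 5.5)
The plan is to unwind the definition of $\MGF$ and the satisfaction relation for quantifier-free formulas. By (\ref{m:1}) the domain of $\MGF$ is $\set{\alpha_G}{\alpha \in \mathcal R}$. The atomic relations are interpreted in the natural way (the way implicit in (\ref{k:1})): $\MGF \models R(\alpha_G,\ldots)$ iff the Boolean value $||R(\alpha,\ldots)||$ lies in $G$. Equality is given by $\equiv$ from (\ref{def:a}), and function symbols act pointwise, which is legitimate because $\mathcal R$ is closed under the $\mathcal L$-functions.

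The first step is a sanity check: the interpretation does not depend on the choice of representatives. If $\alpha \equiv \alpha'$, then $\set{x}{\alpha(x)=\alpha'(x)} \in G$. Intersecting this set with $\set{x}{R(\alpha(x),\ldots)}$ shows that the two atomic sets agree modulo a set whose complement is in $G$. Since $G$ is a filter on $\BB$ (upward closed under $\sub^*$ and closed under finite intersections), one of them is in $G$ iff the other is.

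The main step is an induction on the quantifier-free formula $\varphi$ proving the stronger statement
$$\MGF \models \varphi((\alpha_0)_G,\ldots) \iff \set{x \in \Omega}{M \models \varphi(\alpha_0(x),\ldots)} \in G .$$
Here membership in $G$ means that the equivalence class of the set modulo $\Iup$ belongs to the generic filter. The induction runs as follows.
\begin{itemize}
\item Atomic case (including equality and terms built from $\mathcal L$-functions): this holds by the definition of the interpretation. For terms, note that $f(\alpha_0,\ldots)$ is again a random variable in $\mathcal R$ and is computed pointwise.
\item Conjunction: the set for $\varphi\wedge\psi$ is the intersection of the two sets. A filter contains an intersection iff it contains both pieces.
\item Negation: the set for $\neg\varphi$ is the complement of the set for $\varphi$. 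Here we use that $G$ is an ultrafilter on $\BB$, which is where genericity enters. For every $a \in \BB$, the set $\set{p}{p \le a \mbox{ or } p \le \neg a}$ is dense, so $G$ meets it. Hence exactly one of $a$ and $\neg a$ lies in $G$; both cannot, since $a \wedge \neg a = 0 \notin G$.
\end{itemize}

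The lemma then follows at once from the forward direction of this equivalence. Alternatively, one can phrase it through (\ref{k:2}) and Theorem \ref{th:complete}. The condition $p=\set{x}{\varphi(\ldots)}$ satisfies $p \le ||\varphi(\alpha_0,\ldots)||$ trivially, so $p \Vdash \varphi(\alpha_0,\ldots)$. Hence $\varphi$ holds in $\MGF$ for every generic $G$ containing $p$.

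I expect the only delicate point to be the negation step, that is, verifying that $G$ decides every element of $\BB$. The rest is bookkeeping about representatives and using the closure of $\mathcal R$ so that terms remain inside $\mathcal R$.
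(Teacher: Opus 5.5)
Your proposal is correct, and your main route is more explicit than the paper's. The paper gives no separate argument. It only records the lemma as a consequence of (\ref{k:2}): the condition $p=\set{x\in\Omega}{\varphi(\alpha_0(x),\ldots)}/\Iup$ trivially lies below $||\varphi(\alpha_0,\ldots)||$ and so forces $\varphi$. The passage from the Boolean-valued relation (\ref{k:1}) to actual truth in $\MGF$ is taken for granted. That is exactly your closing ``alternative'' paragraph.

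Your main route proves that passage instead of assuming it. The induction on quantifier-free $\varphi$ establishes the truth lemma: $\MGF\models\varphi((\alpha_0)_G,\ldots)$ iff $\set{x}{\varphi(\alpha_0(x),\ldots)}/\Iup\in G$. The filter property handles well-definedness of $\equiv$ and of the interpretation, and genericity is used only to make $G$ an ultrafilter in the negation step. This buys you two things:
\begin{enumerate}[(1)]
\item You no longer need Theorem \ref{th:complete}. That theorem concerns the set-theoretic forcing relation, and identifying it with (\ref{k:1}) for these names is precisely what your induction verifies.
\item You get the full biconditional, which is what the proof of Theorem \ref{th:reg} actually uses.
\end{enumerate}

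One small slip: the lemma needs the implication from ``the set is in $G$'' to ``$\MGF\models\varphi$''. That is the right-to-left direction of your displayed equivalence, not the forward one. Since you prove both directions, nothing is lost.
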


We use Lemma \ref{fact:m} to prove a few results on the properties of the linear order $(\MGF,<)$ in Theorem \ref{th:dense} (i.e., we apply it to formulas $\varphi$ just in the language $\{<,=\}$).

Let us mention an important forcing-related corollary of the homogeneity of $\BB$ in Lemma \ref{lm:homo}. For the formulation, let us denote by $\dot{M}^{\mathcal R}$ a name for $\MGF$.

\begin{lemma}\label{c:homo}
The following are equivalent for every $p \in \BB$: $$1_{\BB} \Vdash (\exists \dotM) \varphi^{\dotM} \mbox{ if and only if } p \Vdash (\exists \dotM) \varphi^{\dotM},$$ where $\varphi$ is a sentence in the forcing language, and $\varphi^{\dotM}$ is its relativization to $\dotM$.
\end{lemma}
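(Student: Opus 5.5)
The plan is the standard homogeneity argument for forcing with a homogeneous Boolean algebra. One direction is trivial: if $1_{\BB} \Vdash (\exists \dotM)\varphi^{\dotM}$, then every $p \le 1_{\BB}$ forces the same statement, since forcing is preserved downward. For the converse we argue by contraposition. Suppose $p \Vdash (\exists \dotM)\varphi^{\dotM}$ but $1_{\BB}$ does not force it. Then there is some $q \neq 0$ with $q \Vdash \neg(\exists \dotM)\varphi^{\dotM}$. We will build an automorphism of $\BB$ that moves a part of $p$ onto a part compatible with $q$, and derive a contradiction.

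\textbf{Step 1 (isomorphism of principal ideals).} By Lemma \ref{lm:homo}, $\BB$ is Maharam-type homogeneous. By Fact \ref{f:homo}, every nonzero principal ideal $\BB{}_y$ is isomorphic to $\BB$. Hence $\BB{}_p \cong \BB{}_q$ as Boolean algebras. We may also take measure-preserving versions after rescaling: by Theorem \ref{th:BBtype} and Theorem \ref{th:Mah}, each ideal with its normalized measure is isomorphic to $\BK$.

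\textbf{Step 2 (extension to an automorphism).} We want an automorphism $\pi$ of $\BB$ with $\pi(p') \le q$ for some nonzero $p' \le p$. The simplest route is to shrink $p$ and $q$ to pieces $p' \le p$, $q' \le q$ that are either disjoint or equal. If $p \wedge q \ne 0$, take $p' = q' = p\wedge q$ and $\pi = \id$. Otherwise $p \wedge q = 0$. Using Fact \ref{f:homo}, fix an isomorphism $f:\BB{}_p \to \BB{}_q$ and define
\[
\pi(x) = f(x\wedge p) \vee f^{-1}(x \wedge q) \vee (x \wedge -(p\vee q)).
\]
This is an automorphism of $\BB$ swapping $p$ and $q$.

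\textbf{Step 3 (transfer of forcing).} An automorphism $\pi$ of the complete algebra $\BB$ induces a map on names satisfying $r \Vdash \psi(\tau) \iff \pi(r)\Vdash \psi(\pi\tau)$. The crucial point is that the sentence $(\exists \dotM)\varphi^{\dotM}$ is fixed by $\pi$. This requires its parameters to be canonical names, meaning check names from $V$ (such as $\check M$, $\check{\mathcal R}$ and parameters of $\varphi$), which $\pi$ fixes. The name $\dotM$ itself is existentially quantified, so it is not a parameter. Granting this, $p \Vdash (\exists\dotM)\varphi^{\dotM}$ yields $q = \pi(p) \Vdash (\exists\dotM)\varphi^{\dotM}$, which contradicts the choice of $q$ since $q \neq 0$. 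In the compatible case we directly get $p\wedge q$ forcing both the sentence and its negation, again a contradiction.

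\textbf{Main obstacle.} The delicate point is Step 3: the statement must be read so that $\varphi$ mentions only ground-model (check) parameters and the name $\dotM$ is quantified rather than a fixed name like $\dot M^{\mathcal R}$. A fixed name built from $\BB$-values need not be $\pi$-invariant, so the symmetry argument would not apply to it. I would make this explicit, noting that $\pi\check a = \check a$ by induction on rank. The remaining steps are routine given Lemma \ref{lm:homo} and Fact \ref{f:homo}.
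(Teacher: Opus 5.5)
Your proof is correct, but it uses homogeneity through a different mechanism than the paper.

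\textbf{The paper's route.} Its one-line argument applies Lemma \ref{lm:homo} and Fact \ref{f:homo} to get an isomorphism $(\BB)_p \cong \BB$ directly. Forcing with $\BB$ below $p$ is the same as forcing with $(\BB)_p$. An isomorphism carrying $1_{(\BB)_p}=p$ to $1_{\BB}$ translates names while fixing check names. Hence $p \Vdash \psi$ iff $1_{(\BB)_p} \Vdash \psi$ iff $1_{\BB}\Vdash \psi$, for every sentence $\psi$ with ground-model parameters. This needs no contraposition, no auxiliary condition $q$, and no case split on compatibility.

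\textbf{Your route.} You compose two such isomorphisms into $(\BB)_p \cong (\BB)_q$ and glue them into an automorphism of $\BB$ swapping disjoint $p$ and $q$. Your formula for $\pi$ is indeed an automorphism, since $p$, $q$, $-(p\vee q)$ split $\BB$ into a product of three principal ideals. You then invoke the standard symmetry lemma $r \Vdash \psi(\tau) \iff \pi(r)\Vdash\psi(\pi\tau)$. This is the usual ``weakly homogeneous'' argument. It is a little longer, but it produces automorphisms of $\BB$ itself. It would also go through under the weaker hypothesis that any two disjoint nonzero elements have nonzero parts with isomorphic principal ideals, rather than each ideal being isomorphic to the whole algebra. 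The measure-preserving refinement via Theorems \ref{th:BBtype} and \ref{th:Mah} in your Step 1 is unnecessary: only the Boolean structure matters for forcing.

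\textbf{Your Step 3 caveat.} You require that $\varphi$ carry only check-name parameters, and that $\dotM$ be existentially quantified rather than be the fixed, generally non-invariant name $\dot{M}^{\mathcal R}$. This applies equally to the paper's isomorphism argument, since an isomorphism of algebras only fixes canonical names. The paper leaves this implicit, so making it explicit is a genuine improvement.
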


\begin{proof}
This follows because Lemma \ref{lm:homo} and Fact \ref{f:homo} guarantee an isomorphism between $\BB$ and $(\BB)_p$.
\end{proof}

Lemma \ref{c:homo} implies that with regard to independence results for bounded arithmetics, the exact choice of generic filters is irrelevant.

\subsection{The random integer $\id_G$} \label{sec:id}

Let us prove that the ``random integer'' $\id_G$ generates the whole generic extension $V[G]$.

\begin{theorem}\label{th:st1}  Suppose $G$ is a $V$-generic filter for $\Ap$. Then the following hold:
\bce[(i)]
\item $V[\id_G] = V[G],$ where $\id$ is the  random variable which is the identity on $\Omega$. 
\item If $\id \in \mathcal R$, $\id_G \in \MGF$ and therefore $\MGF$ is not an element of $V$.
\ece
\end{theorem}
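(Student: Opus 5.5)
For (i), the inclusion $V[\id_G] \sub V[G]$ is immediate: $\id_G$ is the $\equiv$-class of $\id$, and $\equiv$ is defined in $V[G]$ from $G$ and random variables in $V$. For the converse, I will define $G$ inside $V[\id_G]$ using only $\id_G$ and parameters from $V$ (such as $M$, $\Omega$ and the set of all random variables). The guiding observation is that for every $A \in \Ap$ the characteristic function $\chi_A: \Omega \to M$ is itself a random variable, because $A \in M$ implies $\chi_A \in M$. By (\ref{k:2}),
$$\|\chi_A(\id) = 1\| = \set{x \in \Omega}{\chi_A(x) = 1}/\Iup = [A]_{\Iup},$$
so $A \in G$ if and only if $(\chi_A)_G = 1$ in the maximal model $\MGa$. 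The same fact can be written without characteristic functions: $A \in G$ if and only if $\set{x\in\Omega}{\id(x) \in A} \in G$.

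The remaining task is to see that this membership can be read off from $\id_G$ alone. In $V[\id_G]$ we know the set $\id_G$, which is the set of all random variables $\alpha \in \Fa$ with $\set{x}{\alpha(x) = x} \in G$. I claim that for $A \in \Ap$,
$$A \in G \iff \exists \alpha \in \id_G\ (\alpha \mbox{ restricted to } \Omega \mbox{ has range } \sub A),$$
where the range condition is checked in $V$. For the forward direction, take $A \in G$ and let $\alpha$ agree with $\id$ on $A$ and take a fixed value $a_0 \in A$ off $A$. Then $\alpha \in M$, its range lies in $A$, and $\set{x}{\alpha(x)=x} \supseteq A \in G$, so $\alpha \in \id_G$. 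For the backward direction, if $\alpha \in \id_G$ has range inside $A$, then $\set{x}{\alpha(x)=x} \sub A$ and this set lies in $G$, hence $A \in G$ by upward closure. This identifies $G = \set{A\in \Ap}{\exists \alpha \in \id_G\,(\rng(\alpha)\sub A)}$ as a set definable in $V[\id_G]$, so $V[G] \sub V[\id_G]$.

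For (ii), $\id \in \mathcal R$ gives $\id_G \in \MGF$ directly. Suppose toward a contradiction that $\MGF \in V$. Then, whatever coding of equivalence classes is used, $\id_G$ would belong to $V$ as an element of a set in $V$. By (i) this gives $V[G] = V[\id_G] = V$, so $G \in V$. That is impossible, because $\BB$ is atomless: it is isomorphic to $\BK$ by Theorem \ref{th:BBtype}, or one can see this directly by splitting any set of positive measure into halves in $M$. A $V$-generic filter for an atomless forcing is never in $V$.

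The main obstacle is making precise in what sense $\MGF$ or $\id_G$ ``is in $V$''. Strictly speaking, the elements $\alpha_G$ are equivalence classes of sets in $V$, so I will argue with the class itself: $\id_G$ as a subset of $\Fa$ already determines $G$, by the displayed equivalence in (i), so $\id_G \notin V$, and hence $\MGF \notin V$.
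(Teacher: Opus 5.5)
Your proposal is correct and is essentially the paper's argument. The paper recovers $G$ as $\set{A \in \Ap}{\alpha^A_G = \id_G}$, where $\alpha^A$ is the identity on $A$ and $0$ elsewhere; this is the same decoding of $G$ from the equivalence class of $\id$ that you carry out. Your existential over members of $\id_G$ with range in $A$, and your choice of a value $a_0 \in A$ off $A$, are only cosmetic variations that avoid the null-set adjustment at $0$. For (ii) the paper simply says that $\id_G \in \MGF$ puts $\MGF$ outside $V$. Your appeal to the atomlessness of $\BB$, and your reading of $\id_G$ as a class of random variables in $\Fa$ (consistent with Theorem \ref{th:reg}), supply exactly what the paper leaves implicit.
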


\begin{proof}
Regarding (i). Define for every $A \in \Ap$ a random variable $\alpha^A: \Omega \to \Omega$ such that $\alpha^A$ is the identity on $A$ and $0$ otherwise. Then $$G = \set{A \in \Ap}{\id_G = \alpha^A_G},$$ and hence $V[G] = V[\id_G]$.

Regarding (ii). If $\id \in \mathcal R$, then $\id_G \in \MGF$ and hence $\MGF$ is in $V[G] \setminus V$.
\end{proof} 

\brm \label{rm:id}
Note that by an analogous argument, the \emph{random real $r_G$} determined by a generic for $\B_\omega$ in Remark \ref{rm:mex}(\ref{mex:1}) is identical to $\id_G$, where $\id$ is now the identity on the unit interval. In the analogy with $\B_\omega$, $\id_G$ from Theorem \ref{th:st1} may be called a \emph{random integer}. However, there are also some differences: while all distinct reals have a non-zero distance, and hence $r_G$ is a unique real for $\B_\omega$, there are  intervals on $(M,<)$ with measure zero. The \emph{random integer} should be more correctly understood as an interval on $(\MGF,<)$ with a measure zero which contains $\id_G$. More precisely, since the random variable $\id$ is unbounded in the terminology of Theorem \ref{th:dense}(\ref{d:2}), for every infinitesimally small $m$ in $M$, the interval $(\id_G-m, \id_G + m)$ on $(\MGF,<)$ does not contain any elements from $M$ and is therefore generated by $\id_G$.
\erm

\begin{lemma}\label{lm:st2}
Suppose the assumptions of Theorem \ref{th:st1} hold. Then:
\bce[(i)]
\item $\id_G < \max \Omega$.
\item \label{cof}  Suppose $\mathcal R$ contains all constant functions for elements of $M$. Then $(M,<)$ is a cofinal suborder of $(\MGF,<)$.
\ece
\end{lemma}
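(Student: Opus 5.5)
Both items reduce, via Lemma \ref{fact:m}, to checking that a quantifier-free formula in $\{<,=\}$ holds on a subset of $\Omega$ of positive measure. In fact we will find sets that are all of $\Omega$, so they lie in every generic filter. Throughout, $\Omega$ is treated as an initial segment $[0,\max\Omega]$ of $M$ (or $\{0,\dots,|\Omega|-1\}$), as agreed in the remark preceding Section \ref{sec:KMaharam}. Constants $n\in M$ are identified with $(\alpha_n)_G$.

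For (i), consider the atomic formula $\id(x) < \alpha_{\max\Omega}(x)$, i.e.\ $x<\max\Omega$. The set $\set{x\in\Omega}{x<\max\Omega}$ is $\Omega$ minus a single point. A singleton has counting measure $\st(1/|\Omega|)=0$ because $\Omega$ is non-standard, so this set has measure $1$. It is therefore in $\Ap$ and is $\sub^*$-equivalent to the top condition $\Omega$, hence belongs to every generic $G$. Lemma \ref{fact:m} then gives $\MGF\models \id_G<\max\Omega$.

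One bookkeeping point needs care. If $\max\Omega$ itself is not in $\Omega$ under the convention used (for instance $\Omega=\{0,\dots,\Omega-1\}$), the set is all of $\Omega$ and the argument is even simpler. Either way the only input is that points are null.

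For (ii), fix an arbitrary $\alpha\in\mathcal R$. Since $\alpha\in M$ is an $M$-finite function on $\Omega$, the value $n:=\max_{x\in\Omega}\alpha(x)$ exists in $M$ by induction (true arithmetic), and $n+1\in M$. The set $\set{x\in\Omega}{\alpha(x)<n+1}$ equals $\Omega$, so it lies in $G$. Because $\alpha_{n+1}\in\mathcal R$ by hypothesis, Lemma \ref{fact:m} yields $\MGF\models \alpha_G<n+1$. Every element of $\MGF$ has the form $\alpha_G$, so $M$ is cofinal in $(\MGF,<)$.

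It remains to check that $M$ is a suborder, i.e.\ that $(\alpha_m)_G<(\alpha_n)_G$ if and only if $m<n$. The set $\set{x}{m<n}$ is either $\Omega$ or $\emptyset$, so this also follows from Lemma \ref{fact:m} and the fact that $\emptyset\notin G$. Injectivity of the identification follows in the same way.

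The main obstacle is minor: one has to make sure that the maximum of $\alpha$ is taken internally in $M$ (using $\alpha\in M$), not externally. An external supremum might fail to exist in $M$, and this is exactly where the assumption that random variables are elements of $M$ is used.
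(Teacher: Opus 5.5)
Your proof is correct and matches the paper's argument. For (i), the paper first gets $\id_G \le \max\Omega$ from the full set $\Omega$ and then rules out equality because the singleton $\{\max\Omega\}$ is null; you merge these into the single measure-one condition $\Omega\setminus\{\max\Omega\}$, and for (ii) both arguments bound $\alpha_G$ by an $M$-internal bound on the range of $\alpha\in M$. (Your side case ``$\max\Omega\notin\Omega$'' is vacuous, since the maximum of a finite set always belongs to it, e.g.\ $\max\{0,\dots,\Omega-1\}=\Omega-1$, but this does not affect the argument.)
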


\begin{proof}
With regard to (i). Since $\id$ is the identity on $\Omega$, $\id_G \le \max \Omega$ by Lemma \ref{fact:m} ($\set{x \in \Omega}{\id(x) \le \max \Omega}$ is equal to $\Omega$ is therefore in every $G$). By a density argument, it cannot be equal to any ground-model number $n$, in particular $\id_G$ is not equal to $\max \Omega$ (see also Remark \ref{rm:new}).

With regard to (ii). Since every random variable $\alpha:\Omega \to M$ must be an element of $M$, the range of $\alpha$ must be bounded by some $n \in M$. If $n \in M$ is a such a bound, i.e.\ $\alpha(x) < n$ for every $x \in \Omega$, then $\alpha_G < n$.
\end{proof}

\subsection{The linear orders $(M,<)$ and $(\MGF,<)$}\label{sec:LO}

Suppose $\mathcal R$ is a collection of random variables as in Definition \ref{def:F} which contains all constant functions so that $(\MGF,<)$ extends $(M,<)$. Let prove several results on the structure of the linear order $(\MGF,<)$.

Let us first observe that new integers in $\MGF$ are added densely often below $\Omega$ in the strongest sense, provided $\mathcal R$ contains the appropriate random variables from Theorem \ref{th:dd}:\footnote{Clearly, $\Z$-chains are always formed by elements of the same type: either all elements in a $\Z$-chain are in the ground model  $M$ or new (in $\MGF \setminus M$). Hence if $|m-n|$ is standardly finite, there can be no new number inserted between them.}

\begin{theorem}\label{th:dd}
Suppose $m < n \in \Omega$ are two numbers with $|m-n|$ being externally infinite. If $G$ is $V$-generic for $\Ap$, then there is a random variable $\alpha$ such that $$m < \alpha_G < n.$$
\end{theorem}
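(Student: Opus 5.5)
We want a random variable $\alpha \in M$ with $m<\alpha_G<n$ and $\alpha_G$ new. Simplest plan: choose $\alpha:\Omega\to M$ in $M$ whose values always lie strictly between $m$ and $n$, so that by Lemma \ref{fact:m} applied to the condition $\set{x\in\Omega}{m<\alpha(x)<n}=\Omega$, which lies in every $G$, we get $m<\alpha_G<n$ in $\MGF$. The content of the statement is then that $\alpha_G$ is a genuinely new integer, i.e.\ $\alpha_G\neq k$ for every $k\in M$ (otherwise the statement is trivial via a constant $k$ strictly between $m$ and $n$). So we aim to build $\alpha$ in $M$ which is ``spread out'': every fibre $\alpha^{-1}(\{k\})$ has measure zero.

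\textbf{Construction.} Let $d=|n-m|-1$, the $M$-size of the open interval $(m,n)$, which is externally infinite (non-standard). Work inside $M$, using that $M$ is a model of true arithmetic. If $d\ge|\Omega|$, let $\alpha(x)=m+1+x$; this is injective with range in $(m,n)$. Otherwise, divide $|\Omega|$ by $d$: $|\Omega|=qd+l$ with $l<d$, and set $\alpha(x)=m+1+(x\bmod d)$. In both cases $\alpha\in M$ is definable from parameters, so it is a random variable, and $\rng(\alpha)\subseteq(m,n)$. Each fibre has $M$-size at most $q+1$, which must be infinitesimal relative to $|\Omega|$: indeed $qd\le|\Omega|$ with $d$ non-standard, hence $kq<|\Omega|$ for all standard $k$ (this is exactly the computation in Lemma \ref{lm:M-order}(\ref{m3a})). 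In the first case the fibres are singletons.

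\textbf{Genericity.} For each $k\in M$, the set $D_k=\set{A\in\Ap}{A\cap\alpha^{-1}(\{k\})=\emptyset}$ is dense, since for any $A\in\Ap$ the set $A\setminus\alpha^{-1}(\{k\})$ still has the same positive measure and lies in $M$. Hence $G$ meets $D_k$, which gives $\set{x}{\alpha(x)=k}\notin G$, i.e.\ $\alpha_G\neq k$ by the definition (\ref{def:a}) of $\equiv$. So $\alpha_G\in\MGF\setminus M$ with $m<\alpha_G<n$, provided $\alpha\in\mathcal R$. The statement is formulated for the existence of some random variable, so we take $\mathcal R=\Fa$, or any $\mathcal R$ containing these definable functions, e.g.\ one closed under the $\mathcal L_{all}$-functions and containing $\id$ and the constants.

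\textbf{Main obstacle.} The only delicate point is verifying that each fibre is null, i.e.\ that $q+1$ is infinitesimal when $d$ is non-standard. This follows from $(q+1)k\le 2qk\le 2|\Omega|/d\cdot k<|\Omega|$ for each standard $k$. I expect this computation, together with the case split on whether $d<|\Omega|$, to be the main thing to check carefully. Everything else is direct from Lemma \ref{fact:m} and a density argument.
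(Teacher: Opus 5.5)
Your proof is correct, and its skeleton matches the paper's. You divide $|\Omega|$ by the length of $(m,n)$ inside $M$ (as in Lemma \ref{lm:M-order}(\ref{m3a})) and define an internal $\alpha$ with $\rng(\alpha)\sub(m,n)$ all of whose fibres are null. Then $m<\alpha_G<n$ comes from Lemma \ref{fact:m}, and newness comes from the null fibres.

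Where you differ is in the choice of map. The paper first assumes $\mup([m,n])=0$. It cuts $\Omega$ into $l$ consecutive blocks of length $p=|(m,n)|$, which are null because $p$ is then infinitesimal. It makes $\alpha$ constant on each block and injective across blocks, and handles $\mup([m,n])>0$ by shrinking to a null subinterval of non-standard length.

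Your residue map $x\mapsto m+1+(x\bmod d)$ has fibres of size at most $q+1$, which are null merely because $d$ is non-standard. So you need no case split. It also avoids a counting constraint in the block-constant version: injectivity across blocks needs $l\le p$, i.e.\ roughly $p^2\ge|\Omega|$. This fails for, say, $p=\lfloor\log|\Omega|\rfloor$; there one should instead use $p$ blocks of length $\lfloor|\Omega|/p\rfloor$, which is the ``div'' counterpart of your ``mod'' map.

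Two minor points:
\begin{enumerate}
\item The case $d\ge|\Omega|$ cannot arise when $m,n\in\Omega$ with $\Omega$ read as an initial segment, though including it is harmless.
\item Your density argument for newness is more than needed. A null fibre $\set{x\in\Omega}{\alpha(x)=k}$ is not an element of $\Ap$ at all, so it cannot belong to $G$.
\end{enumerate}
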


\begin{proof}
Suppose first that $\mup([m,n]) = 0$. Let $p:= |(m,n)|$ and consider the division of $|\Omega|$ by $p$, obtaining a non-standard $l$ and $r < l$ such that $|\Omega| = pl +  r$. The devision determines a partition of $\Omega$ into $l$-many intervals of size $p$, $\set{I_i}{i<l}$,  with a remainder of size $r$. By arguments  in see Lemma \ref{lm:M-order}(\ref{m3a}), each $I_i$ has measure $0$ and the set of all intervals has also measure $0$. Define $\alpha$ to be a random variable in $M$ such that $\alpha$ is constant on the intervals $I_i$, $i < l$, and the induced map $\alpha"I_i$ is an injection from $\set{I_i}{i<l}$ into the interval $(m,n)$ ($\alpha$ can be defined arbitrarily on the remainder $r$). By Lemma \ref{fact:m}, $m < \alpha_G < n$. By the construction there is no $n' \in M$ such that $\set{x \in \Omega}{\alpha(x) = n'}$ has a positive measure, i.e., $\alpha_G$ is a new number.

Suppose now that $\mup([m,n]) > 0$. Then choose some subset $I \sub (m,n)$, $|m-n|$ infinite, with $\mup(I) = 0$, and proceed exactly as in the previous case.
\end{proof}

The converse direction, i.e.\ the density of $(M,<)$ in $(\MGF,<)$ below $\Omega$, is more complex because it depends on the types of random variables.

Notice that if $\alpha$ is a random variable in $M$, then the range of $\alpha$ is either standardly finite or else of uncountable cardinality by $\omega_1$-saturation of $M$ (since we assume $|M| = \omega_1$, the uncountable cardinality is $\omega_1$). If the range is finite, then $\alpha_G$ is a ground model number in $M$ by the additivity of the measure $\mup$. If the range is uncountable, the $\sigma$-additivity of $\mup$ is not sufficient by itself to determine whether $\alpha_G$ is a ground model number, and a more careful analysis mut be carried out.

Let us say that $\alpha: \Omega \to \Omega$ is \emph{bounded} if there is an interval $I \sub \Omega$ of measure $0$, $I \in M$, such that $$\rng(\alpha) \sub I.$$ We say that $\alpha$ is bounded by $I$. 

Let us say that $\alpha: \Omega \to \Omega$ is \emph{unbounded} if for every interval $I \sub \Omega$ of measure $0$, $I \in M$, $$\alpha^{-1}"I \mbox{ has measure zero}.$$ A canonical example of an unbounded variable is the identity function $\id$ (see Remark \ref{rm:id} for more details on $\id_G$).

\brm \label{rm:new}
Note that if $\alpha$ is unbounded, then it is forced by the weakest condition in $\BB$ that it is not a number in $M$ (for every $m \in M$, $\set{x \in \Omega}{\alpha(x) = m}$ has measure zero).
\erm

This distinction is not a dichotomy in general, but one can naturally work below a condition in $\Ap$ where exactly one of the two options holds.

In the following theorem, items (\ref{d:1}) and (\ref{d:2}) provide conditions for an interval in $\MGF$ being disjoint from $M$, and items (\ref{l4}) and (\ref{j3}) for density of $M$ in $\MGF$.

For the arguments in the next theorem notice that a number $m^* \in M$ belongs to an interval $(\alpha_G,\beta_G)$ if and only if $\set{x \in \Omega}{\alpha(x) < m^* < \beta(x)} \in G$, where $G$ is a generic filter for $\Ap$. 

\brm
The results in (i) and (ii) of Theorem \ref{th:dense} are non-trivial. In particular for (i), if $\alpha$ is bounded by $I$ and $I$ is included in an interval $[m,n]$ on $M$, then $m < \alpha_G < n$, and hence the interval $(\alpha_G-m',\alpha_G + m')$, where $m' = |m-n|$, contains the number $n \in M$ (this can only happen if there is another $\beta_G \not \in M$ such that $\alpha_G+\beta_G= n$).
\erm

\begin{theorem}\label{th:dense} Let $V[G]$ be a fixed generic extension for $\Ap$.
\bce[(i)]

\item \label{d:1} Suppose $\alpha$ is bounded by $I$ of measure zero and $\alpha$ is forced by the weakest condition to be a new number.  Then there is a nonstandard infinitesimal $m < |I|$  such that the interval $(\alpha_G-m, \alpha_G + m)$ does not contain any numbers from $M$.

\item \label{d:2} Suppose $\alpha$ is an unbounded random variable. Then for every infinitesimal $m$ the interval $(\alpha_G-m, \alpha_G + m)$ does not contain any numbers from $M$.

\item \label{l4} Suppose $\alpha,\beta: \Omega \to \Omega$ are random variables. Suppose $\alpha_G < \beta_G$ are non-infinitesimally apart in the sense $$\mup(\set{x \in \Omega}{ \mup([\alpha(x),\beta(x)]) > 0}) > 0.$$ Then the interval $(\alpha_G, \beta_G)$ in $\MGF$ contains an interval of positive measure $I \sub \Omega$, $I \in M$.

\item \label{j3} Suppose $\alpha,\beta: \Omega \to M$ are random variables. Define that they are \emph{countably-apart} if whenever $A = \set{x \in \Omega}{\alpha(x) < \beta(x)}$ has a positive measure, then there is a countable collection $X_A \sub M$ such that the union of $A_m = \set{x \in A}{\alpha(x) \le m \le \beta(x)}, m \in X_A$, covers $A$. Then if $\alpha_G < \beta_G$, there is $m \in M$, such that $\alpha_G < m < \beta_G$. 
\ece
\end{theorem}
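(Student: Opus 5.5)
The overall strategy is to reduce every item to Lemma \ref{fact:m} together with the observation stated before the theorem. That observation says $m^*\in M$ lies in $(\alpha_G,\beta_G)$ iff $\set{x\in\Omega}{\alpha(x)<m^*<\beta(x)}\in G$. So each item becomes a density or measure computation inside $M$, combined with an external countable-additivity argument for $\mup$.

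For (\ref{d:2}), fix an infinitesimal $m$ and suppose some $m^*\in M$ lies in $(\alpha_G-m,\alpha_G+m)$. Then $A_{m^*}=\set{x}{|\alpha(x)-m^*|<m}\in G$. But $A_{m^*}=\alpha^{-1}{}''(m^*-m,m^*+m)$ is the preimage of an $M$-interval of measure zero, since $2m$ is infinitesimal. By unboundedness $A_{m^*}$ is null, so it cannot lie in $G$. Because $m^*$ was arbitrary, no ground-model number lies in the interval.

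For (\ref{d:1}), I would argue inside $V[G]$. For each $m^*\in M\cap I$, the distance $|\alpha_G-m^*|$ is a new number: were it a ground number $k$, then $\alpha_G=m^*\pm k\in M$, contradicting newness. Now look for an infinitesimal $m$ below all these distances.
\begin{itemize}
\item A distance that is non-infinitesimal imposes no constraint, since any infinitesimal $m$ is below it.
\item For the infinitesimal distances, I would use Lemma \ref{lm:M-order} and Theorem \ref{th:dd}-style choices of scale. The candidate is $m=\lfloor\sqrt{\cdot}\rfloor$-type shrinking below $|I|$, obtained by a density argument.
\end{itemize}
Concretely: given any condition $p$, the function $x\mapsto$ (distance from $\alpha(x)$ to the nearest $M$-point of a fixed fine grid) is a random variable. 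Since $\alpha$ is new, this distance is nonzero on a set of positive measure, and the $M$-minimum over a subset of positive measure yields a candidate $m$. This step is the main obstacle, because the ground model is dense in itself. I expect the honest content to be the following. Pick the $M$-least infinite-distance scale among the values $|\alpha(x)-\text{nearest grid point}|$, using a pigeonhole over the $M$-finitely many values of $\alpha$ on a positive-measure set. Then $\omega_1$-saturation gives a single nonstandard $m$ below countably many nonstandard bounds.

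For (\ref{l4}), let $B=\set{x}{\mup([\alpha(x),\beta(x)])>0}$. This set is not internal, so I would write it as $\bigcup_k B_k$ with $B_k=\set{x}{k\,|\beta(x)-\alpha(x)|\ge|\Omega|}\in M$. By countable additivity some $B_k$ has positive measure, and by genericity (working below it) some $B_k\in G$. Partition $\Omega$ internally into $2k$ intervals $J_0,\dots,J_{2k-1}$, each of length about $|\Omega|/2k$. For $x\in B_k$, the interval $[\alpha(x),\beta(x)]$ has length at least $|\Omega|/k$ and so contains some whole $J_i$. This splits $B_k$ into finitely many pieces $B_{k,i}$, and one of them lies in $G$. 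For that $i$ we get $\alpha_G<\min J_i$ and $\max J_i<\beta_G$ by Lemma \ref{fact:m}, giving the required $I=J_i$ of positive measure.

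For (\ref{j3}), suppose $\alpha_G<\beta_G$. Then $A\in G$, so $A$ has positive measure. Cover it by the countably many $A_m$, $m\in X_A$; these sets are internal. Since $A$ is the countable union and $G$ is generic for the ccc $\sigma$-complete algebra, $[A]=\bigvee_m[A_m]$ in $\BB$. Hence some $A_m\in G$, which gives $\alpha_G\le m\le\beta_G$. To obtain strict inequality I would handle equality by newness, or else adjust by noting that if $\alpha_G=m$ the statement is to be read with $\le$.
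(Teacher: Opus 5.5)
\textbf{Items (ii)--(iv).} Your arguments here are essentially the paper's. Item (ii) is the same preimage-of-a-null-interval argument. In (iii), your partition into $2k$ blocks, so that each $[\alpha(x),\beta(x)]$ of length $\ge|\Omega|/k$ contains a whole block, is a tidier version of the paper's partition into $k$ blocks followed by the case split on whether $[\alpha(x),\beta(x)]\cap B_j$ contains $\min B_j$ or $\max B_j$. In (iv), you, like the paper, only obtain $\alpha_G\le m\le\beta_G$. Your hedge is the right reading, and ``newness'' cannot repair it. Take $\alpha\equiv m_0$ and $\beta\equiv m_0+1$: the pair is countably apart (with $X_A=\{m_0\}$) and $\alpha_G<\beta_G$, yet no $m\in M$ lies strictly between them.

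\textbf{The gap is in (i).} There you give no proof, and the sketched route does not lead to one. The quantity ``distance from $\alpha(x)$ to the nearest ground-model point'' is vacuous pointwise, since $\alpha(x)\in M$ is itself a ground point; for the grid $M$ that distance is $0$ for every $x$. A coarser grid only controls distances to grid points, not to all of $M$. The closing saturation step also fails. The constraints are indexed by $m^*\in M$, of which there are $\omega_1$ many, so countable saturation does not help. Moreover, the bounds $|\alpha_G-m^*|$ live in $\MGF$, not in $M$. Asking for an $m$ below all of them merely restates the claim.

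\textbf{The missing idea.} Newness is a measure property of the fibres of $\alpha$, and it is the fibres that must be counted internally.
\begin{itemize}
\item Since $\alpha\in M$, the maximal fibre size $m'=\max_y|\alpha^{-1}\{y\}|$ exists in $M$. It is infinitesimal, because it is the size of one fibre, and every fibre is null since $\alpha$ is forced to be new.
\item Boundedness by $I$ gives $|\Omega|\le|\rng(\alpha)|\cdot m'\le |I|\cdot m'$, so $m'$ is nonstandard.
\item Take $m$ maximal with $m'm^2\le|\Omega|$. Then $m$ is nonstandard and $mm'$ is infinitesimal. Also $m^2\le|\Omega|/m'\le|I|$, so $m\le\sqrt{|I|}<|I|$ and $m$ is infinitesimal.
\item For any $m^*\in M$, the set $\set{x\in\Omega}{|\alpha(x)-m^*|<m}$ has at most $(2m-1)m'$ elements. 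Hence it is null and not in $G$.
\item By the observation preceding the theorem, $m^*\notin(\alpha_G-m,\alpha_G+m)$.
\end{itemize}
This counting of preimages parallels your argument for (ii), with the fibre bound $m'$ playing the role of unboundedness. It is the step your proposal for (i) lacks.
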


\begin{proof}
Regarding (\ref{d:1}). Since $\alpha$ is forced to be a new number by the weakest condition, the set $\set{I_x}{x \in \rng(\alpha) \sub I}$, where $I_x =  \alpha^{-1}{}"\{x\}$, is a partition of $\Omega$ into sets of measure zero. Let $r = |\rng(\alpha)|$. Let $m'$ be the maximum of the set $\set{|I_x|}{x \in \rng(\alpha)}$ (it exists because $\alpha \in M$). Note that $m'$ must be nonstandard (and infinitesimal): if $m' = k$ for some $k \in \N$, then $rk$ would be greater or equal to $\Omega$ which is impossible. Let $m < r$ be the maximum number such that $m'm^2 \le \Omega$. Since $m'$ is nonstandard infinitesimal, $m$ is nonstandard and $m'm$ is infinitesimal (because $m'mk<\Omega$ for all $k \in \N$). It is easy to check that for every $m^* \in M$, the set $\set{x \in \Omega}{m^* \in (\alpha(x)-m, \alpha(x) + m)}$ has measure zero because the number of elements in this set is bounded by $4m'm$. It follows that there is no ground model number in the interval $(\alpha_G-m, \alpha_G + m)$.

Regarding (\ref{d:2}). By Remark \ref{rm:new}, $\alpha_G$ is a new number. Let $m$ be an infinitesimal number. If $m^*$ is any number in $M$, the set $A_{m^*} := \set{x \in \Omega}{m^* \in (\alpha(x)-m, \alpha(x) + m)}$ has measure zero: since the intervals $(\alpha(x)-m,\alpha(x)+m)$ for $x \in A_{m^*}$ share a common number $m^*$, there is an interval $I$ of measure zero (in fact, of measure not larger than $4m$) such that $$\bigcup \set{(\alpha(x)-m,\alpha(x)+m)}{x \in A_{m^*}} \sub I.$$ By unboundedness of $\alpha$, $\alpha^{-1}"I$ has measure zero, and so does $A_{m^*}$. It follows that $m^*$ does not belong to the interval $(\alpha_G-m,\alpha_G+m)$.

Regarding (\ref{l4}). By the $\sigma$-additivity of $\mup$, we can assume that there is a positive $k \in \N$ such that 
$$\mup(\set{x \in \Omega}{ \mup([\alpha(x),\beta(x)]) \ge 1/k}) > 0.$$

Let $\set{B_i}{i < k}$ be a partition of $\Omega$ into $k$-many disjoint intervals each with measure at most $1/k$ (all $B_i$ are in $M$) obtained by dividing $\Omega$ by $k$: $$\Omega = [0,b_0] \cup \cdots \cup [b_{k-2},b_{k-1}],$$ where $[0,b_0] = B_0$, etc. By the $\sigma$-additivity of $\mup$, we can assume the there is a fixed $B_j$ and a positive $i \in \N$ such that 
\begin{equation} \label{i0} \mup(\set{x \in \Omega}{ \mup([\alpha(x),\beta(x)] \cap B_j) \ge 1/ki}) > 0.\end{equation}
Since we deal with intervals, we know that for each $[\alpha(x),\beta(x)]$ in the family in (\ref{i0}), $[\alpha(x),\beta(x)] \cap B_j$ determines an interval in $B_j$ which either contains the minimum or the maximum of $B_j$ (or both). Without loss of generality, assume that the first option yields a set of a positive measure, i.e., 
\begin{multline} \label{i1} \mup(\set{x \in \Omega}{ \mup([\alpha(x),\beta(x)] \cap B_j) \ge 1/ki \\ \mbox{ and the interval } [\alpha(x),\beta(x)] \cap B_j \mbox{ contains the min of }B_j}) > 0.\end{multline}
Let $I$ be an interval of $B_j$ which contains the minimum of $B_j$ and has measure at least $1/ki$. By our argument, it follows that $I \sub [\alpha(x),\beta(x)]$ for all $x$ in the family (\ref{i1}).

In the forcing language, the condition \begin{multline} \label{i2} \set{x \in \Omega}{ \mup([\alpha(x),\beta(x)] \cap B_j) \ge 1/ki \\ \mbox{ and the interval } [\alpha(x),\beta(x)] \cap B_j \mbox{ contains the min of }B_j}\end{multline} forces that $I$ is included in the interval $[\alpha,\beta]$, where $[\alpha,\beta]$ is a name for the interval in $\MGF$ determined by $\alpha_G$ and $\beta_G$ (where $G$ is a generic filter containing the condition (\ref{i2})).

Finally, regarding (\ref{j3}). Suppose for a contradiction that there are random variables $\alpha,\beta$ such that $$A = \set{x \in \Omega}{\alpha(x) < \beta(x)} \in \Ap,$$ but there are no $B \sub^*A$ and $m \in M$ such that \begin{equation} \label{contra} B = \set{x \in \Omega}{\alpha(x) \le m \le \beta(x)} \in \Ap.\end{equation} By our assumption, there is a countable collection $X_A\sub M$ such that the sets $$A_m = \set{x \in \Omega}{\alpha(x) \le m \le \beta(x)},$$ $m \in X_A$, cover $A$. By the $\sigma$-additivity of the measure on the probability algebra not all of the $A_m$'s can have measure $0$, and any $A_m$ with the associated $m$ are counterexamples to  (\ref{contra}).
\end{proof}

Let us mention some natural questions related to the properties of $\MGF$. 

\begin{question}\label{q:2}
Besides the density of $(M,<)$ in $(\MGF,<)$, what other concepts there are for measuring how ``large'' is $[0,\Omega] \cap M \sub \Omega$ in $\MGF$?
\end{question}

\begin{question}
Suppose $\mu_{\MGF,\Omega}$ is the measure defined on $\MGF$ in $V[G]$ (using the construction for $\mup$). Under which assumptions is it well-defined? What is the relationship between $\mup$ and $\mu_{\MGF,\Omega}$? 
\end{question}

\subsection{The maximal model and its submodels}\label{sec:ex2}

Recall that $\Fa$ is the maximal set of random variables in Definition \ref{def:F}.

\begin{theorem}\label{th:reg}
Suppose $V,M, \Omega$ are given and let $\Fa$ and $\mathcal R$ be sets of random variables on $\Omega$.  Suppose $G$ is $V$-generic for $\AcOm$. Then the generic extension $\MGF$ is a substructure of $\MGa$ (they satisfy the same atomic formulas with random variables in $\mathcal R$).
\end{theorem}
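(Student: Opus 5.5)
The plan is to realize $\MGF$ inside $\MGa$ via the identity map on equivalence classes. First, note that $\mathcal R \sub \Fa$, since $\Fa$ consists of all random variables, i.e.\ all functions $\alpha:\Omega \to M$ with $\alpha \in M$. In $V[G]$, the equivalence $\equiv$ from (\ref{def:a}) is defined by the same condition, namely $\set{x \in \Omega}{\alpha(x)=\beta(x)} \in G$, whether we regard $\alpha,\beta$ as elements of $\mathcal R$ or of $\Fa$. So the class $\alpha_G$ computed in $\mathcal R$ is just the class computed in $\Fa$ intersected with $\mathcal R$. I will define
$$e:\MGF \to \MGa, \quad e(\alpha_G^{\mathcal R}) := \alpha_G^{\Fa},$$
and check that it is well defined and injective. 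Both directions reduce to the same fact: $\alpha \equiv \beta$ holds relative to $\mathcal R$ iff it holds relative to $\Fa$, because the defining set does not depend on the ambient collection.

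Second, I will verify that $e$ preserves and reflects atomic formulas. Take an $\mathcal L$-relation $R$ and $\alpha_0,\ldots,\alpha_{k-1} \in \mathcal R$. I will interpret $R$ in both models in the natural way: $R(\alpha_{0,G},\ldots)$ holds iff $\set{x \in \Omega}{M \models R(\alpha_0(x),\ldots)} \in G$. Lemma \ref{fact:m} and (\ref{k:2}) supply exactly this for quantifier-free formulas. The set in question is determined by $M$ and the functions $\alpha_i$ alone, so its membership in $G$ is independent of whether we work in $\mathcal R$ or $\Fa$.

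For function symbols $f \in \mathcal L$, I will use the closure of $\mathcal R$ under $\mathcal L$-functions from Definition \ref{def:F}(ii). The pointwise composite $f(\alpha_0,\ldots)$, i.e.\ $x \mapsto f(\alpha_0(x),\ldots)$, lies in $\mathcal R$, and its class represents $f^{\MGF}(\alpha_{0,G},\ldots)$ in both models. Hence $e$ commutes with functions, and likewise with constants, since $\mathcal R$ contains them. By induction on terms, every atomic formula $\varphi(\alpha_0,\ldots)$ with parameters from $\mathcal R$ has the same Boolean value $||\varphi||$ in both settings. Therefore $\MGF \models \varphi$ iff $\MGa \models \varphi$, and identifying $\MGF$ with its image under $e$ makes it a substructure.

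The main obstacle is not combinatorial. It is making sure the interpretation of symbols in $\MGF$ is really the restriction of the one in $\MGa$, i.e.\ that Definition \ref{def:F}(ii) gives enough closure for the function symbols, so that the value of a term computed in $\MGa$ is represented by an element of $\mathcal R$. Without that closure, $\MGF$ would only be a relational substructure. I will also remark that only quantifier-free agreement is claimed. The Boolean values of existential formulas use suprema over $\mathcal R$ versus over $\Fa$, so they can differ, and elementarity should not be expected.
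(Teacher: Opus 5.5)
Your proposal is correct and follows essentially the same route as the paper: both rest on the observation (via Lemma \ref{fact:m} and (\ref{k:2})) that an atomic formula with parameters $(\alpha_0)_G,\ldots$ from $\mathcal R$ holds in either model iff $\set{x \in \Omega}{M \models \varphi(\alpha_0(x),\ldots)} \in G$, a condition that does not depend on the ambient collection of random variables. Your explicit embedding $e$ (well-definedness and injectivity being the case of $=$) and your handling of function symbols via the closure in Definition \ref{def:F}(ii) just spell out details the paper leaves implicit.
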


\begin{proof}
Theorem follows easily by Lemma \ref{fact:m}:

\begin{multline*}
\MGF \models \varphi((\alpha_0)_G, \ldots, (\alpha_k)_G) \mbox{ iff }\\
\set{x \in \Omega}{M \models \varphi(\alpha_0(x), \ldots, \alpha_k(x))} \in G \\\mbox{ iff } \MGa \models \varphi((\alpha_0)_G, \ldots, (\alpha_k)_G) \end{multline*}for every atomic $\varphi$ and random variables $\alpha_0, \ldots, \alpha_k$ from $\mathcal R$.
It follows that existential closures of quantifier-free formulas true in $\MGF$ are true in $\MGa$ as well.\footnote{This is the same collection of formulas as mentioned in Section \ref{sec:PW} with regard to Paris--Wilkie forcing.}
\end{proof}

In real applications in \cite{krajicek11}, proper subsets $\mathcal R$ of $\Fa$ are always used. An illustrative example of such an $\mathcal R$---relatively accessible as regards the necessary background in bounded arithmetics---is in \Kr \cite[Section 24.3]{krajicek11}; the relevant model $\MGF$ corresponding to this construction witnesses that under certain assumption the theory $\mathrm{PV}$ does not prove $S^1_2$.

All other models $\MGF$ in \cite{krajicek11} (more precisely their restriction to the number sort) are substructures of the maximal model $\MGa$. It is the appropriate choice of $\mathcal R$ which is key to proving independence results in bounded arithmetics by means of models $\MGF$. 

\subsection{Some generalizations}\label{sec:refine}

Let us mention two important generalizations of the ideas presented here (following examples in \Kr \cite{krajicek11}). We have not developed the forcing interpretation for them, but it might be of some interest and relevance (both from the point of bounded arithmetics and set theory).

\begin{itemize}
\item In our review, we omitted the second-sort random variables (functionals) because they do not add genuinely new ideas for the construction (from the point of this article). However, by a discussion in \cite[Section 5.4]{krajicek11}, the second-sort variables in the two-sorted structure $K(F,G)$ can actually be added in a more general way and may not be elements of $M$ (but it is required that they satisfy the axioms of identity). An application of this generalization has not been found, yet, though.

\item Another direction of a possible generalization of the basic setup is to consider an infinite sample space $\Omega$ which is not an element of $M$ (see \cite[Section 1.5]{krajicek11}).
\end{itemize}

\section{Some historical and foundational comments}\label{sec:found}

In this section we provide some more context as regards history of forcing in bounded arithmetics and also some foundational comments on the relationship between forcing in set theory and arithmetics (and other weak theories).

``Bounded arithmetics'' refer to weak sub theories of Peano Arithmetic in which the schema of induction is appropriately weakened; it is known that there is a natural correspondence between non-provability of certain statement in these theories and the existence of lower bounds for proofs of propositional tautologies (see \cite{buss85,krajicek95,cook2010logical} for more details). For example lower bounds on the proof size in the constant depth Frege system are based on forcing constructions in bounded arithmetic by Ajtai~\cite{ajtai1994complexity}, and some state-of-the-art Boolean circuit size lower bounds (see  \cite{chen2024symmetric,li2024symmetric}) connect to provability result in bounded arithmetic~\cite{jevrabek2004dual}. See Honzik et al.\ \cite{Rjoint} where these topics are elaborated.

A major obstacle to overcome in developing forcing for (bounded) arithmetics is that all transitive models of set theory compute the standard natural numbers $\N$ the same way, and hence all formulas quantified into $\N$ are absolute.\footnote{\label{ft:delta}More generally, by Shoenefield's theorem, no $\Sigma^1_2$-statements in the second-order arithmetics can change their truth value between $V$ and $V[G]$ (see \cite[Theorem 25.20]{JECHbook} for more details and more general formulations).} This implies that non-standard models must be used to obtain indepence of arithmetical formulas, and the resulting computations are more complex and bring new challenges.\footnote{For set theory, the development of forcing has shown that for many formulas $\varphi$  independent from $\ZFC$, there are standard (transitive) models of set theory $M,N$ which witness their independence (i.e., forcing extensions). This is an interesting discovery from the foundational perspective: there is a priori no reason why standard models should be sufficient to witness independence from $\ZFC$.} It may be the case that a method different from forcing will be found  to show independence of formulas for arithmetics (other than the auto-referential G{\"o}del-like arguments), but for the moment no obvious candidates for alternative methods are known (see Shelah \cite[Problem 2.4]{MR1962296} and \cite{MR3468189}).

The forcing constructions in bounded arithmetics are sometimes developed with a notion of an appropriately defined notion of a generic filter $G,$ which yields a generic extension $M[G]$ of a non-standard model $M$, see for instance Takeuti and Yasumoto \cite{MR1441106, MR1649065}. However, in the majority of cases (see \cite{atseriasmuller15} for an extensive list of references), the notion of genericity is often left only implicit, and the construction is formulated in terms of forcing conditions (and sometimes of Boolean-valued models as in \cite{krajicek11}).\footnote{\label{ft:all} See Kunen \cite[Section IV.5]{MR2905394} for an extensive discussion of the metamathematical aspects of forcing, in particular as regards the role of generic filters. We would like to add that while generic filters and models  are dispensable from the metamathematical perspective, they provide a ``model-theoretic'' feel and allow one to study various---seemingly unrelated logical independence arguments---in a combinatorial setting of a single model. Thus in a generic extension $V[G]$, where $G$ is generic for $\BK$, one always finds for instance (i) \emph{Sierpinski sets of reals} (Moore \cite[Theorem 3.2]{MR1981870}) and (ii) \emph{non-standard models} $M[G]$ of bounded arithmetics of size $\omega_1$ witnessing forcing constructions in \Kr \cite{krajicek11} (see Corollary \ref{cor:BBtype}). For both examples (i) and (ii), a different combinatorial argument must be found, specific for the problem at hand, even though the forcing notion and the generic extension are the same.} 

The varied constructions, which share the use of partial orders which approximate the desired ``generic objects'', where axiomatized by Atserias and  M{\"u}ller \cite{atseriasmuller15}, giving these constructions a uniform presentation. The axiomatization in \cite{atseriasmuller15} specifies  the minimal requirements for forcing: in particular, they consider only the pair $(M,\P)$,\footnote{In contrast to set theory, a non-trivial partial order $\P$ is never an element of $M$, nor is it definable in $M$. Ensuring the definability of the forcing relation is therefore a new and difficult problem in forcing in bounded arithmetics. See the discussion in \cite[Section 4.3]{atseriasmuller15} for the role of definability for Ajtai forcing.} where $M$ is a countable model and $\P$ is a forcing notion. The countably many dense sets to be met are given, roughly speaking, by definability over the pair $(M,\P)$. The requirement on the countability is connected to the Rasiowa--Sikorsky lemma which \cite{atseriasmuller15} invokes to obtain generic filters. Incidentally, a specific forcing construction which does not seem to be easily interpreted by \cite{atseriasmuller15} is \Krr's forcing which uses an uncountable partial order and Boolean-valued evaluations.\footnote{\Kr \cite{krajicek11} states a footnote on page 3 asserting that one can use the L{\"o}wenheim--Skolem theorem to collapse the random forcing to a countable domain (see also \cite{Rjoint} where this point is elaborated). It is not clear whether this collapse would make \Krr's forcing interpretable in \cite{atseriasmuller15}.} 

In terms of general frameworks for forcing in bounded arithmetics, the present article may be interpreted as providing an alternative framework, conceptually different from \cite{atseriasmuller15}. It uses the set-theoretic forcing for the general techniques and facts (like the ``forcing completness theorem'', Theorem \ref{th:complete}, or the development of forcing names), while putting specific forcing arguments in bounded arithmetics into the context of non-standard models of arithmetics, embedded in a standard model of set theory (in the sense of Footnote \ref{ft:all}). This of course does not solve any of the combinatorial problems by itself; it just makes forcing in arithmetics an instance of forcing in set theory, with the extra requirement of developing new combinatorial techniques related to non-standard models of arithmetics.

\bibliographystyle{amsplain}
\bibliography{main}
 \end{document}